\numberwithin{equation}{section}
\newtheorem{Theorem}{Theorem}[section]
\newtheorem{Proposition}[Theorem]{Proposition}
\newtheorem{Corollary}[Theorem]{Corollary}
\newtheorem{Lemma}[Theorem]{Lemma}
\newtheorem{Example}[Theorem]{Example}
\newtheorem{Remark}[Theorem]{Remark}
\newtheorem{Definition}[Theorem]{Definition}
\def\bbZ{\mathbb{Z}}
\def\bbR{\mathbb{R}}
\def\bbT{\mathbb{T}}
\def \essinf{\mathbb{\mathrm{essinf}}}
\def \esssup{\mathbb{\mathrm{esssup}}}
\begin{document}

\title{Multilinear Fourier multipliers on variable Lebesgue spaces
\thanks{This work was supported partially by the
National Natural Science Foundation of China(10990012) and the Research Fund for the Doctoral Program
of Higher Education.}}

\author{Jineng Ren \quad and \quad Wenchang Sun\thanks{Corresponding author.}\\
School of Mathematical Sciences and LPMC,  Nankai University,
      Tianjin~300071, China\\
Email:\, renjineng@mail.nankai.edu.cn, sunwch@nankai.edu.cn}
\date{}
\maketitle

\begin{abstract} 
In this  paper, we study properties of the bilinear multiplier space. 
We give a necessary condition for a continuous integrable function to be a bilinear multiplier on variable exponent Lebesgue spaces. 
And we prove the localization theorem of multipliers on variable exponent Lebesgue spaces. 
Moreover, we present a Mihlin-H\"ormander type theorem for multilinear Fourier multipliers on weighted variable Lebesgue spaces and give 
some applications.
\end{abstract}

\textbf{Keywords.}\ bilinear multiplier space,
multilinear Fourier multiplier,
variable exponent space,
weighted estimate.


 \section{Introduction}

 Given a non-empty open set $\Omega \in\bbR^n$, we denote by
$\mathcal{P}(\Omega)$ the set of the variable exponent functions $p(x)$ such that
\begin{displaymath}
      1\leq p_-\leq p_+<\infty,
\end{displaymath}
where $p_-(\Omega):= \essinf\{p(x):x \in \Omega\}$ and $p_+(\Omega):= \esssup\{p(x):x \in \Omega\}$.

Let $\mathcal{P}^0(\Omega)$ be the set of exponent functions $p(x)$ such that
\begin{displaymath}
       0<p_-\leq p_+<\infty.
\end{displaymath}

Given a measurable functions $f$ on $\Omega$, for $1\leq p(\cdot)\leq \infty$, we define the modular functional associated with $p(\cdot)$ by
 \begin{displaymath}
 \rho_{p(\cdot),\Omega}(f)=\int_{\Omega\setminus \Omega_{\infty}}|f(x)|^{p(x)}dx+\|f(x)\|_{L^{\infty}(\Omega_{\infty})},
 \end{displaymath}
 where $\Omega_{\infty}$ denotes the set of points in $\Omega$ on which $p(x)=\infty$.

 The variable exponent Lebesgue space $ L^{p(\cdot)}(\Omega)$ is defined to be the set of Lebesgue measurable functions $f$ on $\Omega$ satisfying $\rho_{p(\cdot),\Omega}(f/\lambda)<\infty$ for some $\lambda>0$.
 The norm of $f$ in the space is defined by
\begin{displaymath}
\|f\|_{L^{p(\cdot)}}=\inf{\{\lambda >0:  \rho_{p(\cdot),\Omega}(f/ \lambda)\leq 1\}}.
\end{displaymath}
In the case that $p(\cdot)\in \mathcal{P}^0(\Omega)$, it is defined to be the set of all functions $f$ satisfying $|f(x)|^{p_0}\in L^{q(\cdot)}(\Omega)$, $q(x)=p(x)/p_0\in \mathcal{P}(\Omega)$ for some $0<p_0<p_-$(see \cite{MR2810550}). A quasi-norm in the space is defined by
\begin{displaymath}
\|f\|_{p(\cdot),\Omega}=\||f|^{p_0}\|_{q(\cdot),\Omega}^{\frac{1}{p_0}}.
\end{displaymath}
We refer to \cite{cruz} for an introduction to variable exponent Lebesgue spaces.

  In this paper, we study some properties of the space of bilinear Fourier multipliers and the Mihlin-H\"ormander type theorem for multilinear Fourier multipliers on weighted variable Lebesgue spaces. Specifically, let $m$ satisfy certain conditions. We discuss the N-linear Fourier multiplier operator $\mathrm{T}_m$ defined by

$
\mathrm{T}_m(f_1,\cdots,f_N)(x)$
\begin{displaymath}
{}\quad \quad =\int_{\bbR^{Nn}}e^{2 \pi i \langle\xi_1+\cdots+\xi_N,x\rangle}m(\xi_1,\cdots, \xi_N)\hat{f_1}(\xi_1),\cdots,\hat{f_N}(\xi_N)d\xi_1\cdots d\xi_N,
\end{displaymath}
for $f_1,\cdots,f_N \in \mathcal{S}(\bbR^n).$

The multilinear Fourier multipliers have been studied for a long
time. In \cite{MR518170}, Coifman and Meyer proved that $\mathrm{T}_m$ is bounded form $L^{p_1}(\bbR^n)\times\cdots \times L^{p_N}(\bbR^n)$ to $L^{p}(\bbR^n)$ for all $1<p_1,\cdots,p_N, p<\infty$ with $\frac{1}{p_1}+\cdots+\frac{1}{p_N}=\frac{1}{p}$ and $m\in C^s(\bbR^{Nn}\setminus \{0\})$ satisfying
\begin{equation}\label{1.1}
|\partial_{\xi_1}^{\alpha_1}\cdots \partial_{\xi_N}^{\alpha_N}m(\xi_1,\cdots,\xi_N
)|\leq C_{\alpha_1,\cdots,\alpha_N}(|\xi_1|+\cdots+|\xi_N|)^{-(|\alpha_1|+\cdots+|\alpha_N|)},
\end{equation}
for all $|\alpha_1|+\cdots+|\alpha_N|\leq s$, where $N \geq 2$ is an integer and $s$ is a sufficiently large integer.

Tomita\cite{MR2671120} gave a H\"ormander type theorem for multilinear multipliers. Specifically, $\mathrm{T}_m$ is bounded from $L^{p_1}(\bbR^n)\times\cdots \times L^{p_N}(\bbR^n)$ to $L^{p}(\bbR^n)$ for all $1<p_1,\cdots,p_N, p<\infty$ with $\frac{1}{p_1}+\cdots+\frac{1}{p_N}=\frac{1}{p}$ and $s=\frac{Nn}{2}+1$ in (\ref{1.1}). And Grafakos and Si studied the case $p \leq 1$ in \cite{Loukas&si}. The boundedness of multilinear Calder\'on-Zygmund operators with multiple weights was achieved by Grafakos, and Liu and Maldonado and Yang\cite{MR3183648}.

Under the H\"ormander conditions, Fujita and Tomita\cite{MR2958938} obtained some weighted estimates of $\mathrm{T}_m$ for classical $A_p$ weights. And in \cite{MR3178381}, Chen and Lu proved H\"ormander type multilinear theorem on weighted Lebesgue spaces when the Fourier multipliers was only assumed with limited smoothness.

In \cite{MR3007100}, the boundedness of $\mathrm{T}_m$ with multiple weights satisfying condition (\ref{1.1}) was given by Bui and Duong.

In \cite{MR3161113}, Li and Sun got some weighted estimates of $\mathrm{T}_m$ with multiple weights under the H\"ormander conditions in terms of the Sobolev regularity.

Huang and Xu\cite{MR2606534} obtained the boundedness of multilinear Calder\'on-Zygmund aperators on variable exponent Lebesgue spaces.

In this paper, we study the weighted estimates of $\mathrm{T}_m$ with nearly the same conditions as in \cite{MR3161113}, but on variable exponent Lebesgue spaces.

 The theory of bilinear multipliers was first studied by Coifman and Meyer \cite{MR576041}. They considered the ones with smooth symbols. Then, Tao and Thiele achieved some new results for non-smooth symbols in \cite{MR1887641}.

The study of bilinear multipliers has experienced a big progress since Lacey and Thiele\cite{MR1491450,MR1689336} proved that  $m(\xi,\nu)=sign(\xi+\alpha \nu)$ are $(p_1,p_2)$-multipliers for each couple $(p_1,p_2)$ such that $1<p_1,p_2 \leq \infty$, $p_3> 2/3$ and each $\alpha \in \bbR \setminus {\{0,1\}}$.

In \cite{MR1808390}, Fan and Sato proved the DeLeeuw type theorems for the transference of multilinear operators on Lebesgue and Hardy spaces from $\bbR^n$ to $\bbT^n$. In \cite{MR2172393}, Blasco gave the transference theorems from $\bbR^n$ to $\bbZ^n$. We also refer to \cite{R&T&Z,notes} for details.

We first give some definitions.

\begin{Definition}[\cite{weightandvariable}]
Let $1 \leq p_1(\cdot), p_2(\cdot) \leq \infty $, $0< p_3(\cdot) \leq \infty$, and $m(\xi, \eta)$ be a locally integrable function on $\bbR^{2n}$. Define
\[\mathrm{B}_m(f,g)(x)= \int_ {\bbR^{n}}\int_ {\bbR^{n}}\hat{f}(\xi)\hat{g}(\eta)m(\xi, \eta)e^{2 \pi i \langle\xi+\eta,x\rangle}d\xi d\eta\]
\textrm{for all} $f$ and $g$ such that $\hat{f}$ and $\hat{g}$ are bounded and compactly supported.
\end{Definition}

We call $m$ a bilinear multiplier on $\bbR^{2n}$ of type $(p_1(\cdot), p_2(\cdot), p_3(\cdot))$  if there exist some $C>0$ such that $\|\mathrm{B}_m(f,g)\|_{p_3(\cdot)}$$\leq$$C\|f\|_{p_1(\cdot)}\|g\|_{p_2(\cdot)}$ \textrm{for all} $f$ and $g$ such that $\hat{f}$, $\hat{g}$ are bounded and compactly supported, i.e. $\mathrm{B}_m$ extends to a bounded bilinear operator from $L^{p_1(\cdot)}$$\times$$L^{p_2(\cdot)}$ to $L^{p_3(\cdot)}$.

 We write $BM(\bbR^{2n})(p_1(\cdot),p_2(\cdot),p_3(\cdot))$ for the space of bilinear multipliers of type $(p_1(\cdot),p_2(\cdot),p_3(\cdot))$. Let $\|m\|_{p_1(\cdot),p_2(\cdot), p_3(\cdot)}=\|\mathrm{B}_m\|$.

 A similar function space is defined in the following.

\begin{Definition}
Given a function M on $\bbR^{n}$, let $m(\xi,\eta)$=$M(\xi-\eta)$. We say that
\begin{displaymath}
M\in \tilde{BM}(\bbR^{2n})(p_1(\cdot),p_2(\cdot),p_3(\cdot)), \end{displaymath}
if $\mathrm{B}_M(f,g)(x)=\int_{\bbR^{2n}}$ $\hat{f}(\xi)$$\hat{g}(\eta)$$M(\xi-\eta)e^{2\pi i\langle\xi+\eta,x\rangle}d\xi d\eta$ \textrm{for all} $f$ and $g$ such that $\hat{f}$, $\hat{g}$ are bounded and compactly supported, can be extended to a bounded linear operator form $L^{p_1(\cdot)}$$\times$
$L^{p_2(\cdot)}$ to $ L^{p_3(\cdot)}$.
\end{Definition}

\begin{Definition}
A function $p:\Omega \to \bbR^1$ is said to belong to the class $LH_0(\Omega)$, if
\[
|p(x)-p(y)|\leq \frac{C}{-\ln(|x-y|)}, \quad |x-y|\leq \frac{1}{2}, \quad x, y\in \Omega,
\]
where $C>0$ is independent of $x$ or $y$.
\end{Definition}

We simply write $LH_0$ instead of $LH_0(\bbR^n)$ if there is no confusion.
By $C$ etc., we denote various positive constants, which may have different values even in the same line.

\section{Some results on the space $BM(\bbR^{2n})(p_1(\cdot),p_2(\cdot),p_3(\cdot))$}
Some properties of the bilinear multiplier space on variable spaces were given by Kulak\cite{weightandvariable}. Here we give some other properties.

 First, we introduce the standard kernel.

\begin{Definition}
Given a function $K \in L_{loc}^1(\bbR^n\setminus \{0\})$, it is called a standard kernel, if there exists a constant $C>0$ such that:

\begin{enumerate}
\item $|K(x)|\leq \frac{C}{|x|^n}, \quad x\ne 0;$

\item $ |\nabla K(x)|\leq \frac{C}{|x|^{n+1}},\quad x\ne 0;$

\item for $0<r<R$,$\quad |\int_{\{r<|x|<R\}}K(x)dx|\leq C;$

\item $\lim_{\varepsilon \to 0}\int_{\{\varepsilon<|x|<1\}}K(x)dx$ exists.

\end{enumerate}
\end{Definition}

 \begin{Theorem}[localization]\label{local}
 Suppose that
\[m\in BM(\bbR^{2n})(p_1(\cdot),p_2(\cdot),p_3(\cdot)),\]
 $Q$ is a rectangle in $\bbR^{2n}$ and that the Hardy-Littlewood maximal operator $\mathcal{M}$ is bounded on $ L^{p_i(\cdot)}$, where $1<(p_i)_{-}\leq (p_i)_{+}<\infty$, $i=1,2$. Then
 \begin{displaymath}
 m\chi_Q \in BM(\bbR^{2n})(p_1(\cdot),p_2(\cdot),p_3(\cdot))
 \end{displaymath}
 and $\| m\chi_Q \|_{p_1(\cdot),p_2(\cdot),p_3(\cdot)}\leq C \| m \|_{p_1(\cdot),p_2(\cdot),p_3(\cdot)}$, where $C$ is independent of $Q$.
\end{Theorem}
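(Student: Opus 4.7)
My plan is to transfer the localization from the symbol to the input functions and thereby reduce the claim to a uniform $L^{p_i(\cdot)}$-bound for the partial-sum (rough) Fourier projections onto rectangles in $\bbR^n$. Split $Q=R_1\times R_2$ with $R_1,R_2\subset\bbR^n$ the rectangles cut out by $Q$ in the $\xi$- and $\eta$-variables, so that $\chi_Q(\xi,\eta)=\chi_{R_1}(\xi)\chi_{R_2}(\eta)$. For any admissible pair $(f,g)$, a direct manipulation of the integral in the definition of $\mathrm{B}_m$ gives
\[
\mathrm{B}_{m\chi_Q}(f,g)(x)=\mathrm{B}_m(S_{R_1}f,\,S_{R_2}g)(x),
\]
where $S_R$ is the partial-sum operator with Fourier symbol $\chi_R$. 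The Fourier transforms of $S_{R_1}f$ and $S_{R_2}g$ remain bounded and compactly supported, so the multiplier hypothesis for $m$ applies to them, and the theorem reduces to showing that $\|S_R\|_{L^{p_i(\cdot)}\to L^{p_i(\cdot)}}$ is bounded by a constant independent of the rectangle $R\subset\bbR^n$.

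To prove this uniform bound, I would factor $\chi_R(\xi)=\prod_{j=1}^{n}\chi_{(a_j,b_j)}(\xi_j)$ and use the identity $\chi_{(a,b)}(\xi_j)=\tfrac{1}{2}(\operatorname{sgn}(\xi_j-a)-\operatorname{sgn}(\xi_j-b))$ to write $S_R$ as a composition of $n$ commuting one-dimensional operators, each a fixed linear combination of shifted directional Hilbert transforms $H_{j,c}$ with Fourier symbol $-i\operatorname{sgn}(\xi_j-c)$. The task thus reduces to a $c$-uniform bound for $H_{j,c}$ on $L^{p_i(\cdot)}$. Modulation gives the conjugation identity
\[
H_{j,c}f(x)=e^{2\pi i c x_j}\,H_{j,0}\bigl(e^{-2\pi i c x_j}f\bigr)(x),
\]
and since $\rho_{p(\cdot)}$ depends only on $|f|$ the variable Lebesgue norm is invariant under multiplication by unimodular factors, whence $\|H_{j,c}f\|_{p_i(\cdot)}\le\|H_{j,0}\|\,\|f\|_{p_i(\cdot)}$ with constant independent of $c$. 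The unshifted $H_{j,0}$ is a Calder\'on--Zygmund convolution operator, so its boundedness on $L^{p_i(\cdot)}$ is a consequence of the hypothesis that $\mathcal{M}$ is bounded on $L^{p_i(\cdot)}$ via the standard Calder\'on--Zygmund theory on variable Lebesgue spaces. Composing the $n$ resulting one-dimensional bounds yields the required $R$-uniform estimate for $S_R$, and the proof is complete.

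The main obstacle is that variable exponent norms are not translation-invariant in $x$, which rules out the classical trick of absorbing a frequency shift $c$ by translating $f$; the remedy is precisely the dependence of the modular on $|f|$ alone, which is what makes the modulated Hilbert transform behave uniformly in the shift parameter. Once this unimodular invariance is isolated, the remaining ingredients---the Fourier-side reduction to partial-sum operators and the iteration over the $n$ coordinate directions---are routine.
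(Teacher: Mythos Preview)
Your proposal is correct and follows essentially the same approach as the paper: the paper also rewrites $\mathrm{B}_{m\chi_Q}(f,g)=\mathrm{B}_m\bigl((\hat f\chi_{R_1})^\vee,(\hat g\chi_{R_2})^\vee\bigr)$, expresses the interval projection via $(\hat f\chi_{[a,b]})^\vee=\tfrac{i}{2}(\mathrm{M}^a\mathrm{H}\mathrm{M}^{-a}-\mathrm{M}^b\mathrm{H}\mathrm{M}^{-b})f$, and then invokes the Calder\'on--Zygmund bound on $L^{p_i(\cdot)}$ (Lemma~\ref{Th1}) together with the modulation invariance of the norm. The only cosmetic difference is that the paper packages the step ``$\chi_{R_1}(\xi)\chi_{R_2}(\eta)\cdot m(\xi,\eta)$ is again a multiplier'' as an application of the separate Theorem~\ref{Th2}, whereas you carry it out directly.
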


 Let $BM(\bbR^n)(p(\cdot),p(\cdot))$ denote the space of multipliers which correspond to bounded operators from $L^{p(\cdot)}$ to $L^{p(\cdot)}$.

 To prove Theorem~\ref{local} we need the following results in the theory of variable Lebesgue spaces.

 \begin{Lemma}[{\cite[Theorem 5.39]{cruz}}]\label{Th1}
 Let $T$ be a singular integral operator with a standard kernel $K$. Given $p(\cdot) \in \mathcal{P}(\bbR^{n})$ such that $1<p_{-}\leq p_{+}<\infty$, if $\mathcal{M}$ is bounded on $L^{p(\cdot)}$, then for all functions f that are bounded and have compact support, $\| Tf \|_{p(\cdot)}$ $\leq$ $C\| f \|_{p(\cdot)}$, and $T$ extends to a bounded operator on $L^{p(\cdot)}$.
 \end{Lemma}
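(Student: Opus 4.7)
The plan is to reduce the claim to the variable-exponent Rubio de Francia extrapolation theorem, using the classical Muckenhoupt weighted theory for Calder\'on-Zygmund operators as the input. In broad outline: I would first prove (or cite) that $T$ obeys a weighted estimate $\|Tf\|_{L^{p_0}(w)}\le C\|f\|_{L^{p_0}(w)}$ for every fixed $p_0\in(1,\infty)$ and every $w\in A_{p_0}$, with operator norm depending only on $[w]_{A_{p_0}}$; then feed this into the variable-exponent extrapolation theorem to conclude boundedness on $L^{p(\cdot)}$.

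Concretely, the first stage is classical Calder\'on-Zygmund theory. The size bound (1) together with cancellation conditions (3)–(4) yield, via the standard Plancherel or $T(1)$ argument, boundedness of $T$ on $L^2(\bbR^n)$. The gradient bound (2) implies the H\"ormander condition, so the usual Calder\'on-Zygmund decomposition gives weak-type $(1,1)$. Marcinkiewicz interpolation then provides strong $(p_0,p_0)$ boundedness for $1<p_0<\infty$. Next, the Coifman-Fefferman pointwise inequality $M^{\#}(Tf)(x)\le CMf(x)$—which relies only on (1) and (2)—combined with the good-$\lambda$ machinery upgrades this to $\|Tf\|_{L^{p_0}(w)}\le C\|f\|_{L^{p_0}(w)}$ for every $w\in A_{p_0}$, with quantitative dependence on $[w]_{A_{p_0}}$.

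The second stage is extrapolation in the variable setting. Given the hypothesis that $\mathcal{M}$ is bounded on $L^{p(\cdot)}$ with $1<p_-\le p_+<\infty$, one selects $p_0\in(1,p_-)$ and considers the rescaled exponent $q(\cdot)=(p(\cdot)/p_0)'$. Diening's theorem (the self-improvement/duality principle for the boundedness of $\mathcal{M}$ on variable Lebesgue spaces) ensures that $\mathcal{M}$ is also bounded on $L^{q(\cdot)}$. The Rubio de Francia extrapolation theorem in the variable-exponent form then converts the $A_{p_0}$ estimate from stage one into the desired inequality $\|Tf\|_{L^{p(\cdot)}}\le C\|f\|_{L^{p(\cdot)}}$ for bounded, compactly supported $f$. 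Since $p_+<\infty$, such functions are dense in $L^{p(\cdot)}$, so $T$ admits a unique bounded linear extension.

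The main obstacle is the extrapolation step. The classical Rubio de Francia iteration produces a weight $R h$ by summing $\mathcal{M}^k h/(2\|\mathcal{M}\|)^k$, and one must verify that the resulting weight lies in $A_{p_0}$ with controlled constant \emph{and} that $\|Rh\|_{L^{p(\cdot)}}\lesssim\|h\|_{L^{p(\cdot)}}$. Both ingredients depend essentially on the hypothesis that $\mathcal{M}$ is bounded on $L^{p(\cdot)}$ (and, via Diening, on its dual variable space). Once this iteration algorithm is run in the variable-norm setting, the weighted estimate from the first stage transfers over and the theorem follows.
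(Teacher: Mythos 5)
The paper gives no proof of this lemma at all---it is quoted directly from Theorem 5.39 of Cruz-Uribe and Fiorenza---and your two-stage argument (classical Calder\'on--Zygmund theory yielding $\|Tf\|_{L^{p_0}(w)}\le C\|f\|_{L^{p_0}(w)}$ for $w\in A_{p_0}$, followed by variable-exponent Rubio de Francia extrapolation, with Diening's duality/left-openness theorem supplying the boundedness of $\mathcal{M}$ on $L^{(p(\cdot)/p_0)'}$) is exactly the route taken in that reference, so your proposal is correct and essentially matches the source's proof. The one small imprecision is the pointwise bound $\mathrm{M}^{\#}(Tf)\le C\mathcal{M}f$, which should read $\mathrm{M}^{\#}_{\delta}(Tf)\le C\mathcal{M}f$ for some $0<\delta<1$ (or $\mathrm{M}^{\#}(Tf)\le C\mathcal{M}(|f|^{r})^{1/r}$ with $r>1$); this is harmless for the Coifman--Fefferman conclusion you draw from it.
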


 \begin{Theorem}\label{Th2}
Suppose that
$ m_1 \in BM(\bbR^{n})(s_1(\cdot),p_1(\cdot))$, $m_2 \in BM(\bbR^{n})(s_2(\cdot),p_2(\cdot))$ and $
m \in
 BM(\bbR^{2n})(p_1(\cdot),p_2(\cdot),p_3(\cdot))$.
Then we have 
\[
 m_1(\xi)m(\xi,\eta)m_2(\eta)\in
 BM(\bbR^{2n})(s_1(\cdot),s_2(\cdot),p_3(\cdot)). 
\]
\end{Theorem}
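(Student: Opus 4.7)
The plan is to realize the symbol $m_1(\xi)m(\xi,\eta)m_2(\eta)$ as the composition of the two linear multipliers $\mathrm{T}_{m_1},\mathrm{T}_{m_2}$ acting in the respective variables with the bilinear multiplier $\mathrm{B}_m$, and then chain the three hypotheses as norm inequalities.

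Concretely, for $f,g$ with $\hat f,\hat g$ bounded and compactly supported, set
\[
F(x):=\int_{\bbR^n} m_1(\xi)\hat f(\xi) e^{2\pi i\langle\xi,x\rangle}d\xi,\qquad G(x):=\int_{\bbR^n} m_2(\eta)\hat g(\eta) e^{2\pi i\langle\eta,x\rangle}d\eta.
\]
Since $m_1,m_2$ are locally integrable and $\hat f,\hat g$ are bounded with compact support, $m_1\hat f$ and $m_2\hat g$ lie in $L^1(\bbR^n)$ with compact support; hence $F,G$ are bounded continuous functions with $\hat F=m_1\hat f$ and $\hat G=m_2\hat g$. These are exactly $F=\mathrm{T}_{m_1}f$ and $G=\mathrm{T}_{m_2}g$, so the hypothesis $m_i\in BM(\bbR^n)(s_i(\cdot),p_i(\cdot))$ yields
\[
\|F\|_{p_1(\cdot)}\le C\|f\|_{s_1(\cdot)},\qquad \|G\|_{p_2(\cdot)}\le C\|g\|_{s_2(\cdot)}.
\]

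Next, a Fubini computation, justified by the compact supports of $\hat f,\hat g$ and the local integrability of $m_1,m_2,m$, produces the pointwise identity
\[
\mathrm{B}_{m_1 m m_2}(f,g)(x)=\int_{\bbR^n}\!\int_{\bbR^n} m(\xi,\eta)\hat F(\xi)\hat G(\eta)e^{2\pi i\langle\xi+\eta,x\rangle}d\xi\,d\eta=\mathrm{B}_m(F,G)(x),
\]
and combining the boundedness of $\mathrm{B}_m$ with the estimates on $\|F\|_{p_1(\cdot)},\|G\|_{p_2(\cdot)}$ gives
\[
\|\mathrm{B}_{m_1 m m_2}(f,g)\|_{p_3(\cdot)}\le \|m\|_{p_1(\cdot),p_2(\cdot),p_3(\cdot)}\|F\|_{p_1(\cdot)}\|G\|_{p_2(\cdot)}\le C\|f\|_{s_1(\cdot)}\|g\|_{s_2(\cdot)}
\]
on the dense class of test functions, which is the required estimate.

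The only genuinely technical point is the middle identification $\mathrm{B}_{m_1 m m_2}(f,g)=\mathrm{B}_m(F,G)$: the defining integral of $\mathrm{B}_m(F,G)$ requires $\hat F,\hat G$ to be \emph{bounded} with compact support, whereas the above construction only guarantees compact support and local integrability (bondedness fails exactly when $m_1$ or $m_2$ is unbounded on the support of $\hat f,\hat g$). I would dispose of this by the standard truncation $m_i^N:=m_i\chi_{\{|m_i|\le N\}}$, which produces $F_N,G_N$ in the defining class of $\mathrm{B}_m$; passing to the limit $N\to\infty$ via dominated convergence for the double integral, together with bilinear continuity of the extended $\mathrm{B}_m\colon L^{p_1(\cdot)}\times L^{p_2(\cdot)}\to L^{p_3(\cdot)}$ and $L^{p_i(\cdot)}$-convergence $F_N\to F$, $G_N\to G$, delivers the identification in full generality. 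Once this is secure, the theorem follows by the direct chaining of hypotheses displayed above.
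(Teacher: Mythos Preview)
Your proposal is correct and follows essentially the same route as the paper: identify $\mathrm{B}_{m_1 m m_2}(f,g)=\mathrm{B}_m(\mathrm{T}_{m_1}f,\mathrm{T}_{m_2}g)$ and then chain the three boundedness hypotheses. The paper works with $\hat f,\hat g\in C_c^\infty$ and simply writes the identity without comment; your truncation argument for the boundedness of $\hat F,\hat G$ is extra rigor the paper does not supply, but it does not change the approach.
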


 \begin{proof}
 For any $f$ and $g$ with $\hat{f},\hat{g} \in$ $C_c^{\infty}(\bbR^{n})$, we have
 \begin{flalign*}
\mathrm{B}_{m_1 m m_2}(f,g)(x)
&=\int_ {\bbR^{n}}\int_ {\bbR^{n}}\hat{f}(\xi)\hat{g}(\eta)m_1(\xi) m(\xi, \eta) m_2(\eta)e^{2 \pi i \langle\xi+\eta,x\rangle}d\xi d\eta\\
&=\int_ {\bbR^{n}}\int_ {\bbR^{n}} ((\mathrm{T}_{m_1}f))^{\land}(\xi)((\mathrm{T}_{m_2}g))^{\land}(\eta)m(\xi, \eta)e^{2 \pi i \langle\xi+\eta,x\rangle}d\xi d\eta\\
&=\mathrm{B}_{m}(\mathrm{T}_{m_1}f,\mathrm{T}_{m_2}g)(x).
\end{flalign*}

 Therefore,
\begin{eqnarray*}
 \|\mathrm{B}_{m_1 m m_2}(f,g)\|_{p_3(\cdot)}&\leq&\|\mathrm{B}_m\|\|\mathrm{T}_{m_1}f\|_{p_1(\cdot)}
 \|\mathrm{T}_{m_2}g\|_{p_2(\cdot)}\\
&\leq&\|\mathrm{B}_m\|\|m_1\|_{s_1(\cdot),p_1(\cdot)}
 \|m_2\|_{s_2(\cdot),p_2(\cdot)}\|f\|_{s_1(\cdot)}\|g\|_{s_2(\cdot)}.
 \end{eqnarray*}
Then we get the result.
\end{proof}

The following is an explicit example.

\begin{Example}
Suppose that $\frac{1}{p_1(\cdot)}$$+$$\frac{1}{p_2(\cdot)}$=$\frac{1}{p_3(\cdot)}$, $m_1 \in BM(\bbR^{n})(p_1(\cdot),p_1(\cdot))$ and $m_2 \in BM(\bbR^{n})(p_2(\cdot),p_2(\cdot))$, where $p_1(\cdot), p_2(\cdot)\in \mathcal{P}(\bbR^n)$ and $p_3(\cdot)\in \mathcal{P}^0(\bbR^n)$. Then
\begin{displaymath}
m(\xi,\eta)=m_1(\xi)m_2(\eta)\in BM(\bbR^{2n})(p_1(\cdot),p_2(\cdot),p_3(\cdot)).
\end{displaymath}
\end{Example}

\begin{proof}
For any $f$ and $g$ with $\hat{f},\hat{g} \in C_c^{\infty}(\bbR^{n})$, we have
\begin{eqnarray*}
 \mathrm{B}_{1}(f,g)(x)&=&\int_ {\bbR^{n}}\int_ {\bbR^{n}}\hat{f}(\xi)\hat{g}(\eta)e^{2 \pi i \langle\xi+\eta,x\rangle}d\xi d\eta\\
 &=&\int_ {\bbR^{n}}\int_
 {\bbR^{n}}\hat{f}(\xi)e^{2 \pi i \langle\xi,x\rangle}\hat{g}(\eta)e^{2 \pi i \langle\eta,x\rangle}d\xi d\eta\\
 &=&f(x)g(x).
\end{eqnarray*}
By  H\"older's inequality(\cite{cruz}), we have
\begin{eqnarray*} \|\mathrm{B}_{1}(f,g)(x)\|_{p_3(\cdot)}&=&\|f(x)g(x)\|_{p_3(\cdot)}\\
 &\leq & C\|f\|_{p_1(\cdot)}\|g\|_{p_2(\cdot)}.
\end{eqnarray*}
Thus 1$\in  BM(\bbR^{2n})(p_1(\cdot),p_2(\cdot),p_3(\cdot))$. By Theorem~\ref{Th2}, we have
 \begin{displaymath}
 \quad m(\xi,\eta)=m_1(\xi)m_2(\eta)\in BM(\bbR^{2n})(p_1(\cdot),p_2(\cdot),p_3(\cdot)).
\end{displaymath}
\end{proof}

 \begin{proof}[Proof of Theorem~\ref{local}]
 We only consider the case $n=1$. Other cases can be proved similarly. Suppose that $Q=[a,b]\times [c,d]$. Then for any $f$ and $g$ with $\hat{f},\hat{g} \in$ $C_c^{\infty}(\bbR^{n})$,
\begin{align*}
\mathrm{B}_{m\chi_{Q}}(f,g)(x)\\
&= \int_ {\bbR^{n}}\int_ {\bbR^{n}}\hat{f}(\xi)\hat{g}(\eta) m(\xi, \eta)\chi_{Q}(\xi, \eta)e^{2 \pi i \langle\xi+\eta,x\rangle}d\xi d\eta\\
 &=\int_ {\bbR^{n}}\int_ {\bbR^{n}}\hat{f}(\xi)\chi_{[a,b]}(\xi)\hat{g}(\eta)\chi_{[c,d]}(\eta)
   m(\xi, \eta)e^{2 \pi i \langle\xi+\eta,x\rangle}d\xi d\eta\\
&=\mathrm{B}_{m}((\hat{f}\chi_{[a,b]})^{\vee},(\hat{g}\chi_{[c,d]})^{\vee})(x).
 \end{align*}

Note that$(\hat{f}\chi_{[a,b]})^{\vee}=\frac{i}{2}(\mathrm{M}^a\mathrm{H}\mathrm{M}^{-a}-\mathrm{M}^b\mathrm{H}\mathrm{M}^{-b})f$, where $\mathrm{M}^{a}$ denotes the operator $\mathrm{M}^af(x)=e^{2 \pi i ax}f(x)$ and $\mathrm{H}$ denotes the Hilbert transform operator.
Since the Hilbert transform has a standard kernel, by Lemma \ref{Th1} we have
\begin{eqnarray*}
\|(\hat{f}\chi_{[a,b]})^{\vee}\|_{p_1(\cdot)}&=&\frac{1}{2}\|(\mathrm{M}^a\mathrm{H}\mathrm{M}^{-a}f
-\mathrm{M}^b\mathrm{H}\mathrm{M}^{-b}f)\|_{p_1(\cdot)}\\
&\leq& \frac{1}{2}\|\mathrm{H}\mathrm{M}^{-a}f\|_{p_1(\cdot)}+\frac{1}{2}
\|\mathrm{H}\mathrm{M}^{-b}f\|_{p_1(\cdot)}\\
&\leq & C\|f\|_{p_1(\cdot)}.
\end{eqnarray*}
So
  \[\chi_{[a,b]}\in BM(\bbR^{n})(p_1(\cdot),p_1(\cdot)).\]
Similarly we can prove that
\[\chi_{[c,d]}\in BM(\bbR^{n})(p_2(\cdot),p_2(\cdot)).\]
Hence by Theorem~\ref{Th2}, we get
\begin{displaymath}
m\chi_{Q} \in BM(\bbR^{2n})(p_1(\cdot),p_2(\cdot),p_3(\cdot)),
\end{displaymath}
and $\|m\chi_{Q}\|_{p_1(\cdot),p_2(\cdot),p_3(\cdot)}\leq C\|m\|_{p_1(\cdot),p_2(\cdot),p_3(\cdot)}$.

 \end{proof}

Next we show that the space $\tilde{BM}(\bbR^{2n})(p_1(\cdot),p_2(\cdot),p_3(\cdot))$ is invariant under certain operators.

\begin{Theorem}
Given $p_3(\cdot)\in \mathcal{P}(\bbR^n)$, $\phi \in L^1(\bbR^{n})$, if
\[
M\in\tilde{BM}(\bbR^{2n})(p_1(\cdot),p_2(\cdot),p_3(\cdot)),
\]
then \begin{displaymath}
\phi \ast M \in \tilde{BM}(\bbR^{2n})(p_1(\cdot),p_2(\cdot),p_3(\cdot)),
 \end{displaymath}
and $\| \phi \ast M\|_{p_1(\cdot),p_2(\cdot),p_3(\cdot)}$$\leq$$C\| \phi\|_{1}$$\| M\|_{p_1(\cdot),p_2(\cdot),p_3(\cdot)}$.
\end{Theorem}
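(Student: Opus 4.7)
The plan is to realize $\phi \ast M$ as an $L^1$-average of translates of $M$ and exploit the modulation invariance of $\tilde{BM}$. Writing
\[
(\phi\ast M)(\xi-\eta)=\int_{\bbR^n}\phi(y)\,M_y(\xi-\eta)\,dy,\qquad M_y(\zeta):=M(\zeta-y),
\]
and substituting into the defining integral for $\mathrm{B}_{\phi\ast M}(f,g)(x)$, Fubini's theorem (justified because $\hat f,\hat g$ are bounded with compact support and $M$ is locally integrable) yields
\[
\mathrm{B}_{\phi\ast M}(f,g)(x)=\int_{\bbR^n}\phi(y)\,\mathrm{B}_{M_y}(f,g)(x)\,dy.
\]

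Next I would show that each operator $\mathrm{B}_{M_y}$ inherits the bound of $\mathrm{B}_M$, uniformly in $y$. A change of variables $\eta\mapsto\eta-y$ inside the double integral defining $\mathrm{B}_{M_y}(f,g)(x)$ converts $M(\xi-\eta-y)$ into $M(\xi-\eta)$, translates $\hat g$, and produces an outer phase $e^{-2\pi i\langle y,x\rangle}$. Using $\hat g(\eta-y)=\widehat{(e^{2\pi i\langle y,\cdot\rangle}g)}(\eta)$, this rearranges to
\[
\mathrm{B}_{M_y}(f,g)(x)=e^{-2\pi i\langle y,x\rangle}\,\mathrm{B}_M\bigl(f,\,e^{2\pi i\langle y,\cdot\rangle}g\bigr)(x).
\]
Because $|e^{-2\pi i\langle y,x\rangle}|=1$ pointwise and modulation by $e^{2\pi i\langle y,\cdot\rangle}$ preserves absolute values (hence the $L^{p_2(\cdot)}$ norm), the hypothesis $M\in\tilde{BM}(\bbR^{2n})(p_1(\cdot),p_2(\cdot),p_3(\cdot))$ immediately gives
\[
\|\mathrm{B}_{M_y}(f,g)\|_{p_3(\cdot)}\le \|M\|_{p_1(\cdot),p_2(\cdot),p_3(\cdot)}\,\|f\|_{p_1(\cdot)}\,\|g\|_{p_2(\cdot)}
\]
for every $y\in\bbR^n$, with constant independent of $y$.

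Finally I would apply a Minkowski integral inequality for the variable exponent Lebesgue space, which is available because $p_3(\cdot)\in\mathcal P(\bbR^n)$ forces $(p_3)_-\ge 1$:
\[
\Bigl\|\int_{\bbR^n}\phi(y)\,\mathrm{B}_{M_y}(f,g)(\cdot)\,dy\Bigr\|_{p_3(\cdot)}\le\int_{\bbR^n}|\phi(y)|\,\|\mathrm{B}_{M_y}(f,g)\|_{p_3(\cdot)}\,dy.
\]
Combining this with the uniform bound from the previous step produces the desired estimate $\|\phi\|_1\|M\|_{p_1(\cdot),p_2(\cdot),p_3(\cdot)}\|f\|_{p_1(\cdot)}\|g\|_{p_2(\cdot)}$.

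The only substantive obstacle is the vector-valued Minkowski inequality in $L^{p_3(\cdot)}$, which is exactly why the hypothesis $p_3(\cdot)\in\mathcal P(\bbR^n)$ (rather than the broader class $\mathcal P^0$) is imposed; everything else in the argument reduces to a change of variables plus the pointwise identity $|e^{2\pi i\langle y,\cdot\rangle}|\equiv 1$.
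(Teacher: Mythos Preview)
Your proposal is correct and follows essentially the same route as the paper: Fubini, a single change of variables to convert the translated multiplier into $\mathrm{B}_M$ acting on modulated inputs, and then the Minkowski integral inequality in $L^{p_3(\cdot)}$ (which is precisely where the hypothesis $p_3(\cdot)\in\mathcal{P}(\bbR^n)$ is used). The only cosmetic difference is that the paper shifts in the $\xi$-variable, obtaining $\mathrm{B}_{\phi\ast M}(f,g)(x)=\int_{\bbR^n} e^{2\pi i\langle u,x\rangle}\mathrm{B}_M(\mathrm{M}^{-u}f,g)(x)\,\phi(u)\,du$ with the modulation landing on $f$, whereas you shift in $\eta$ and put the modulation on $g$; by the symmetry of the kernel $M(\xi-\eta)$ these are equivalent.
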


\begin{proof}
For any $f$ and $g$ with $\hat{f},\hat{g} \in C_c^{\infty}(\bbR^{n})$, we have
\begin{align*}
  \mathrm{B}_{\phi \ast M}(f,g)(x)\\
 =&\int_ {\bbR^{2n}}\hat{f}(\xi)\hat{g}(\eta)\Big(\int_{\bbR^{n}}M(\xi-\eta-u)\phi(u)du\Big)e^{2 \pi i \langle\xi+\eta,x\rangle}d\xi d\eta\\
 =&\int_{\bbR^{n}}\int_{\bbR^{2n}}\widehat{\mathrm{M}^{-u}f}(\xi) \hat{g}(\eta)M(\xi-\eta)e^{2 \pi i \langle\xi+\eta,x\rangle}d\xi d\eta e^{2 \pi i \langle u, x\rangle}\phi(u) du.
\end{align*}
 By Minkowski's inequality,
\begin{eqnarray*}
\|\mathrm{B}_{\phi \ast M}(f,g)(x)\|_{p_3(\cdot)}&\leq&C \int_{\bbR^{n}}\|\mathrm{B}_{M}(\mathrm{M}^{-u}f,g)(x)\|_{p_3(\cdot)}|\phi(u)|du\\
&\leq&C\|M\|_{p_1(\cdot),p_2(\cdot),p_3(\cdot)}\|\phi\|_1\|f\|_{p_1(\cdot)}
\|g\|_{p_2(\cdot)}.
\end{eqnarray*}
\end{proof}

\begin{Theorem}
Suppose that $p_3\geq 1$, $M$$\in$$\tilde{BM}$$(\bbR^{2n})$$(p_1(\cdot),p_2(\cdot),p_3)$ and $\phi \in L^1(\bbR^{n})$. Then
\begin{displaymath}
 m(\xi, \eta):=M(\xi-\eta)\hat{\phi}(\xi+\eta)\in BM(\bbR^{2n})(p_1(\cdot),p_2(\cdot),p_3),
\end{displaymath}
and $\| m \|_{p_1(\cdot),p_2(\cdot),p_3}$$\leq$$\|\phi\|_1 \|M \|_{p_1(\cdot),p_2(\cdot),p_3}$.
\end{Theorem}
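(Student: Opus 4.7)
The plan is to recognize $\hat{\phi}(\xi+\eta)$ as an inverse Fourier integral in the variable $\xi+\eta$ (dual to the output space variable $x$), so that multiplication by $\hat{\phi}(\xi+\eta)$ on the symbol side corresponds to convolution with $\phi$ on the output side. Concretely, I would first write
\[
\hat{\phi}(\xi+\eta)\,e^{2\pi i\langle\xi+\eta,x\rangle}=\int_{\bbR^{n}}\phi(u)\,e^{2\pi i\langle\xi+\eta,x-u\rangle}\,du,
\]
substitute this into the defining integral for $\mathrm{B}_m(f,g)(x)$, and use Fubini to interchange the $u$-integral with the $(\xi,\eta)$-integral. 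The interchange is justified because $\hat f$ and $\hat g$ are bounded and compactly supported (so the inner double integral over $(\xi,\eta)$ is uniformly bounded in $u$) and $\phi\in L^{1}(\bbR^{n})$.

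After the rearrangement, the inner double integral is exactly $\mathrm{B}_M(f,g)(x-u)$, giving the clean identity
\[
\mathrm{B}_m(f,g)(x)=\bigl(\phi\ast \mathrm{B}_M(f,g)\bigr)(x).
\]
Because $p_3$ is a \emph{constant} exponent with $p_3\ge 1$, the classical Young inequality applies and yields $\|\phi\ast h\|_{p_3}\le \|\phi\|_{1}\|h\|_{p_3}$. Combining this with the hypothesis $M\in\tilde{BM}(\bbR^{2n})(p_1(\cdot),p_2(\cdot),p_3)$ produces
\[
\|\mathrm{B}_m(f,g)\|_{p_3}\le \|\phi\|_{1}\,\|\mathrm{B}_M(f,g)\|_{p_3}\le \|\phi\|_{1}\,\|M\|_{p_1(\cdot),p_2(\cdot),p_3}\,\|f\|_{p_1(\cdot)}\,\|g\|_{p_2(\cdot)},
\]
which is precisely the stated inequality.

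The main subtlety, more than an obstacle, is the reason for the restriction $p_3\ge 1$ with $p_3$ constant: Young's convolution inequality is not available in a general variable-exponent target space, so the argument genuinely needs $p_3$ constant. This also explains the structural difference with the previous theorem, where the convolution occurred on the symbol $M$ itself (in the frequency variable) and could be absorbed by Minkowski's inequality directly inside the variable-exponent norm; here the convolution lands on the output, forcing the use of Young's inequality and hence the constant-exponent hypothesis on $p_3$.
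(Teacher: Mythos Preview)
Your proof is correct and follows essentially the same route as the paper: you obtain the identity $\mathrm{B}_m(f,g)=\phi\ast \mathrm{B}_M(f,g)$ by writing $\hat{\phi}(\xi+\eta)$ as an integral and applying Fubini, then finish with Young's inequality exactly as the authors do. Your additional remarks about the justification of Fubini and the reason for the constant-exponent hypothesis on $p_3$ are accurate and in fact more detailed than the paper's own presentation.
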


\begin{proof}
For any $f$ and $g$ with $\hat{f},\hat{g} \in C_c^{\infty}(\bbR^{n})$, we have
\begin{align*}
 \mathrm{B}_{m}(f,g)(x)\\
=&\int_ {\bbR^{2n}}\hat{f}(\xi)\hat{g}(\eta)M(\xi-\eta)\Big(\int_{\bbR^{n}}\phi(y)e^{-2 \pi i \langle\xi+\eta,y\rangle}dy\Big)e^{2 \pi i \langle\xi+\eta,x\rangle}d\xi d\eta\\
=&\int_{\bbR^{n}} \Big(\int_ {\bbR^{2n}}\hat{f}(\xi)\hat{g}(\eta)M(\xi-\eta)e^{2 \pi i \langle\xi +\eta, x-y\rangle}d\xi d\eta\Big)\phi(y)dy\\
=&\phi \ast \mathrm{B}_{M}(f,g)(x).
\end{align*}
By Young's inequality, we have
\begin{eqnarray*}
\|\mathrm{B}_{m}(f,g)\|_{p_3}&\leq&\|\phi\|_1\|\mathrm{B}_{M}(f,g)\|_{p_3}\\
&=&\|\phi\|_1\|\mathrm{B}_M\|\|f\|_{p_1(\cdot)}\|g\|_{p_2(\cdot)}.
\end{eqnarray*}
Thus, we get the conclusion.
\end{proof}

Finally, we consider the necessary condition of this kind of multipliers. The bilinear classical counterpart was obtained by H\"ormander\cite[Theorem 3.1]{MR0121655} and Blaso\cite{MR2656523}. The multilinear classical one was proved by Grafakos and Torres, see \cite[Proposition 5]{MR1880324} and \cite[Proposition 2.1]{MR2595656}. And the one for multipliers on Lorentz spaces was given by Villarroya\cite[Proposition 3.1]{MR2471164}. Some of their proofs used the translation-invariant property of the classical spaces, which is, however, no longer valid on $L^{p(\cdot)}$. In the following, we prove the variable version of the necessary condition.

\begin{Theorem}[necessary condition]\label{necessary}
Let $|\Omega_\infty^i|=0$, where $\Omega_\infty^i$ denote the set of points on which $p_i(x)=\infty$, $i=1,2,3$. Suppose that there is a non-zero continuous integrable function $M$ such that $M \in  \tilde{BM}(\bbR^{2n})$$(p_1(\cdot),p_2(\cdot),p_3(\cdot))$. Then
\begin{displaymath}
\frac{1}{(p_3)_{+}}\leq\frac{1}{(p_1)_{-}}+\frac{1}{(p_2)_{-}}.
\end{displaymath}
\end{Theorem}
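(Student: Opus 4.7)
The plan is to test the boundedness inequality against a one-parameter family of Gaussians and to extract the claimed relation from the scaling as the Gaussian widens. Since translations are not isometries on $L^{p(\cdot)}$ I cannot use the classical translation trick, but modulations \emph{are} isometries, and I will exploit this in a preliminary reduction. A change of variables $\xi' = \xi - c$ in the defining integral of $B_M$ gives
\[
B_M(e^{2\pi i c\cdot}f, g)(x) = e^{2\pi i c\cdot x}\, B_{M(\cdot+c)}(f, g)(x),
\]
and combined with $\|e^{2\pi i c\cdot}f\|_{p_1(\cdot)} = \|f\|_{p_1(\cdot)}$ this shows $M(\cdot+c) \in \tilde{BM}(\bbR^{2n})(p_1(\cdot),p_2(\cdot),p_3(\cdot))$ with the same norm as $M$. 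Since $M$ is continuous and not identically zero, I will pick $c$ with $M(c) \ne 0$ and replace $M$ by $M(\cdot+c)$, so that without loss of generality $M(0) \ne 0$.

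I will then apply $B_M$ to the symmetric Gaussian pair $G_\sigma(x) = \sigma^{-n/2} e^{-\pi|x|^2/\sigma}$, for which $\widehat{G_\sigma}(\xi) = e^{-\pi\sigma|\xi|^2}$, with scale parameter $\sigma \ge 1$. The substitution $u = \xi - \eta$, $v = \xi + \eta$ (Jacobian $2^{-n}$) separates variables in the defining integral and gives
\[
B_M(G_\sigma, G_\sigma)(x) = 2^{-n} A(\sigma)\, (\sigma/2)^{-n/2}\, e^{-2\pi|x|^2/\sigma}, \qquad A(\sigma) := \int_{\bbR^n} M(u) e^{-\pi\sigma|u|^2/2}\, du.
\]
Rescaling $u = w/\sqrt{\sigma}$ in $A(\sigma)$ and invoking continuity of $M$ at the origin yields $A(\sigma) = M(0)\, 2^{n/2}\sigma^{-n/2}(1 + o(1))$ as $\sigma\to\infty$, which upgrades to the pointwise estimate $|B_M(G_\sigma, G_\sigma)(x)| \ge \frac{|M(0)|}{2}\,\sigma^{-n}\, e^{-2\pi|x|^2/\sigma}$ uniform in $x$ for large $\sigma$.

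The remaining ingredient is the scaling of the variable Luxemburg norms, which comes from one-sided comparisons of $p_i(\cdot)$ with its extrema. Taking the Luxemburg parameter $\mu$ large enough that $G_\sigma/\mu \le 1$ pointwise, the bound $p_i(x) \ge (p_i)_-$ gives $(G_\sigma/\mu)^{p_i(x)} \le (G_\sigma/\mu)^{(p_i)_-}$, and a single-exponent Gaussian integration yields
\[
\|G_\sigma\|_{p_i(\cdot)} \lesssim \sigma^{n/(2(p_i)_-) - n/2}, \qquad i = 1, 2.
\]
Symmetrically, restricting the modular of $e^{-2\pi|\cdot|^2/\sigma}$ to the ball $\{|x| \le c\sqrt{\sigma}\}$ on which the Gaussian is $\ge 1/2$, and using $p_3(x) \le (p_3)_+$, I will obtain $\|e^{-2\pi|\cdot|^2/\sigma}\|_{p_3(\cdot)} \gtrsim \sigma^{n/(2(p_3)_+)}$.

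Feeding these into $\|B_M(G_\sigma, G_\sigma)\|_{p_3(\cdot)} \le \|B_M\|\, \|G_\sigma\|_{p_1(\cdot)}\|G_\sigma\|_{p_2(\cdot)}$ produces
\[
|M(0)|\,\sigma^{-n + n/(2(p_3)_+)} \lesssim \sigma^{-n + n/(2(p_1)_-) + n/(2(p_2)_-)}
\]
for all large $\sigma$, which forces the exponent on the left to be no greater than that on the right, i.e.\ $1/(p_3)_+ \le 1/(p_1)_- + 1/(p_2)_-$. I expect the main subtlety to be the uniform-in-$x$ asymptotic for $A(\sigma)$ that elevates the pointwise approximation $B_M(G_\sigma, G_\sigma) \approx M(0)\sigma^{-n} e^{-2\pi|x|^2/\sigma}$ to a multiplicative lower bound using only continuity of $M$ at $0$; the Luxemburg computations themselves reduce to a classical Gaussian integral and are essentially bookkeeping.
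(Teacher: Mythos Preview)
Your proof is correct and follows essentially the same approach as the paper's: test $B_M$ on dilated Gaussians, separate variables via $u=\xi-\eta$, $v=\xi+\eta$, estimate the variable Lebesgue norms by comparing $p_i(\cdot)$ with its extrema, and use modulation invariance to access a nonzero value of $M$. The paper packages the Gaussian computation as a separate lemma and argues by contradiction (applying the lemma to every translate $M(\cdot+2y)$) rather than reducing to $M(0)\ne 0$ at the outset, but these are cosmetic rearrangements of the same argument.
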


To prove the theorem, we need the following results.

 \begin{Proposition}[{\cite[Corollary~2.22]{cruz}}]\label{Cruz1}
Fix $\Omega$ and $1\leq p(\cdot)\leq \infty$. If $\|f\|_{p(\cdot)}\leq 1$, then $\rho(f)\leq \|f\|_{p(\cdot)}$; if $\|f\|_{p(\cdot)}>1$, then $\rho(f)\geq\|f\|_{p(\cdot)}$.
\end{Proposition}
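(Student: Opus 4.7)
The plan is to exploit the scaling behavior of the modular $\rho := \rho_{p(\cdot),\Omega}$: since the hypothesis $p(x)\geq 1$ yields $t^{p(x)}\leq t$ for $0\leq t\leq 1$ and $t^{p(x)}\geq t$ for $t\geq 1$, one can convert information about $\rho(f/\lambda)$ into information about $\rho(f)$ provided $\lambda$ lies on the correct side of $1$. Combined with the variational characterization $\|f\|_{p(\cdot)}=\inf\{\lambda>0:\rho(f/\lambda)\leq 1\}$ and the product structure of the modular (an integral on $\Omega\setminus\Omega_\infty$ plus an $L^\infty$-term on $\Omega_\infty$), this is exactly what is needed.

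For the first assertion, assume $0<\|f\|_{p(\cdot)}\leq 1$; if the norm is zero, then $f=0$ a.e.\ and both sides vanish. Set $\lambda=\|f\|_{p(\cdot)}$ and pick a decreasing sequence $\lambda_n\downarrow\lambda$ with $\lambda_n\in(\lambda,1]$ and $\rho(f/\lambda_n)\leq 1$, which is available from the defining infimum. For each $x\in\Omega\setminus\Omega_\infty$ we have
\[
|f(x)|^{p(x)}=\lambda_n^{p(x)}\,|f(x)/\lambda_n|^{p(x)}\leq\lambda_n\,|f(x)/\lambda_n|^{p(x)},
\]
because $\lambda_n\leq 1$ and $p(x)\geq 1$, while on $\Omega_\infty$ the identity $\|f\|_{L^\infty(\Omega_\infty)}=\lambda_n\,\|f/\lambda_n\|_{L^\infty(\Omega_\infty)}$ is trivial. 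Summing the two contributions gives $\rho(f)\leq \lambda_n\,\rho(f/\lambda_n)\leq\lambda_n$; letting $n\to\infty$ yields $\rho(f)\leq\lambda=\|f\|_{p(\cdot)}$. The boundary case $\lambda=1$ is handled the same way, or by a direct Fatou estimate along $\lambda_n\downarrow 1$.

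For the second assertion, assume $\|f\|_{p(\cdot)}>1$ and set $\lambda=\|f\|_{p(\cdot)}$. For any $\mu$ with $1\leq\mu<\lambda$, the definition of infimum forces $\rho(f/\mu)>1$, since otherwise $\mu$ would belong to the set whose infimum is $\lambda$. Now $\mu^{-1}\leq 1$ and $p(x)\geq 1$ give $\mu^{-p(x)}\leq\mu^{-1}$, whence
\[
\rho(f/\mu)=\int_{\Omega\setminus\Omega_\infty}\mu^{-p(x)}|f(x)|^{p(x)}\,dx+\mu^{-1}\|f\|_{L^\infty(\Omega_\infty)}\leq\mu^{-1}\rho(f).
\]
Therefore $\rho(f)\geq\mu\,\rho(f/\mu)>\mu$, and letting $\mu\to\lambda^{-}$ gives $\rho(f)\geq\lambda=\|f\|_{p(\cdot)}$.

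The main technical subtlety is simply that $\lambda=\|f\|_{p(\cdot)}$ need not itself lie in the defining set $\{\lambda>0:\rho(f/\lambda)\leq 1\}$ (this is exactly the point where $p_+=\infty$ is allowed), so in Case~1 one must work with nearby $\lambda_n$ above the infimum and pass to the limit; no deeper continuity of $\rho$ is required because each discrete inequality $\rho(f)\leq\lambda_n$ already carries over to the limit by elementary monotonicity. Everything else is bookkeeping of the two pieces of the modular together with the elementary monotonicity of $t\mapsto t^{p(x)}$ relative to $t$ on either side of $1$.
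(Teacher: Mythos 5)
Your proof is correct. The paper does not supply its own argument for this proposition---it imports it verbatim as Corollary~2.22 of Cruz-Uribe and Fiorenza---and your scaling argument (comparing $t^{p(x)}$ with $t$ on either side of $1$, splitting the modular into its integral and $L^\infty$ pieces, and approximating the infimum defining $\|f\|_{p(\cdot)}$ from above or below as appropriate) is exactly the standard proof of that corollary.
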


\begin{Proposition}[{\cite[Corollary~2.23]{cruz}}]\label{Cruz2}
Given $\Omega$ and $1\leq p(\cdot)\leq \infty$, suppose $|\Omega_\infty|=0$. If $\|f\|_{p(\cdot)}>1$, then
\[
\rho(f)^{1/{p_+}}\leq \|f\|_{p(\cdot)}\leq \rho(f)^{1/{p_-}}.
\]
If $0<\|f\|_{p(\cdot)}\leq1$, then
\[
\rho(f)^{1/{p_-}}\leq \|f\|_{p(\cdot)}\leq \rho(f)^{1/{p_+}}.
\]
\end{Proposition}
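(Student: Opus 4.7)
The plan is to prove all four stated inequalities directly from the infimum definition of $\|\cdot\|_{p(\cdot)}$, using two elementary facts. First, $\rho(f/\lambda)$ is monotonically decreasing in $\lambda>0$, so the definition of $\|f\|_{p(\cdot)}$ as an infimum, together with this monotonicity, forces $\rho(f/\lambda)\le 1$ for every $\lambda>\|f\|_{p(\cdot)}$. Second, for any scalar $c>0$ and every $x$ with $p(x)<\infty$ we have $c^{p_-}\le c^{p(x)}\le c^{p_+}$ when $c\ge 1$, with the reverse ordering when $c\le 1$. The hypothesis $|\Omega_\infty|=0$ is precisely what lets us treat $\rho(f)=\int_\Omega |f(x)|^{p(x)}\,dx$ as an ordinary integral with $p(x)$ finite almost everywhere, so the scalar bound integrates pointwise without any boundary term from $\Omega_\infty$.

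For the case $\|f\|_{p(\cdot)}>1$, the upper bound $\|f\|_{p(\cdot)}\le\rho(f)^{1/p_-}$ will be obtained by setting $\lambda:=\rho(f)^{1/p_-}$ and checking $\rho(f/\lambda)\le 1$: Proposition~\ref{Cruz1} gives $\rho(f)\ge\|f\|_{p(\cdot)}>1$, so $\lambda>1$ and $1/\lambda\le 1$, whence $(1/\lambda)^{p(x)}\le (1/\lambda)^{p_-}=1/\rho(f)$ pointwise and therefore $\rho(f/\lambda)\le 1$. For the lower bound $\rho(f)^{1/p_+}\le\|f\|_{p(\cdot)}$, I pick any $\lambda'>\|f\|_{p(\cdot)}>1$, use monotonicity to get $\rho(f/\lambda')\le 1$, and factor $|f(x)|^{p(x)}=(\lambda')^{p(x)}(|f(x)|/\lambda')^{p(x)}$; since $\lambda'\ge 1$ the scalar bound gives $\rho(f)\le(\lambda')^{p_+}\rho(f/\lambda')\le(\lambda')^{p_+}$, and letting $\lambda'\searrow\|f\|_{p(\cdot)}$ delivers $\rho(f)\le\|f\|_{p(\cdot)}^{p_+}$.

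The case $0<\|f\|_{p(\cdot)}\le 1$ runs symmetrically, with the roles of $p_-$ and $p_+$ exchanged because the relevant scaling factors now lie below $1$. For $\rho(f)^{1/p_-}\le\|f\|_{p(\cdot)}$, I choose $\lambda'>\|f\|_{p(\cdot)}$ small enough to stay $\le 1$ and apply the same factoring with $c=\lambda'\le 1$, obtaining $\rho(f)\le(\lambda')^{p_-}$ and letting $\lambda'\searrow\|f\|_{p(\cdot)}$. For $\|f\|_{p(\cdot)}\le\rho(f)^{1/p_+}$, I set $\lambda:=\rho(f)^{1/p_+}$; Proposition~\ref{Cruz1} gives $\rho(f)\le\|f\|_{p(\cdot)}\le 1$, so $\lambda\le 1$ and $1/\lambda\ge 1$, and $(1/\lambda)^{p(x)}\le (1/\lambda)^{p_+}=1/\rho(f)$ yields $\rho(f/\lambda)\le 1$, whence the infimum definition gives $\|f\|_{p(\cdot)}\le\lambda$.

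No substantial obstacle is expected; the proof is essentially careful book-keeping. The only points requiring attention are (a) tracking whether each scaling factor is above or below $1$ so that the scalar inequality is invoked with the correct exponent among $\{p_-,p_+\}$, and (b) producing the lower bounds through the approximating sequence $\lambda'\searrow\|f\|_{p(\cdot)}$ rather than plugging in $\|f\|_{p(\cdot)}$ itself, since the infimum in the norm is not assumed attained. The boundary value $\|f\|_{p(\cdot)}=1$ is automatically consistent: monotone convergence in $\lambda$ together with $\rho(f/\lambda)\le 1$ for $\lambda>1$ and $\rho(f/\lambda)>1$ for $\lambda<1$ forces $\rho(f)=1$ there, and both formulas collapse to equalities.
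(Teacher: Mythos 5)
The paper does not prove this proposition at all; it is quoted directly from Cruz-Uribe and Fiorenza (Corollary~2.23 of \cite{cruz}), so there is no in-paper argument to compare against. Your proof is correct and is essentially the standard textbook derivation: the pointwise comparison $c^{p_-}\le c^{p(x)}\le c^{p_+}$ for $c\ge 1$ (reversed for $c\le 1$) applied under the modular, combined with the infimum definition of the norm, the monotonicity of $\lambda\mapsto\rho(f/\lambda)$, and Proposition~\ref{Cruz1} to decide on which side of $1$ each scaling factor lies.
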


\begin{Lemma}\label{L2}
Let $M \in  \tilde{BM}(\bbR^{2n})(p_1(\cdot),p_2(\cdot),p_3(\cdot))$. If $\frac{1}{q}=\frac{1}{(p_1)_{-}}+\frac{1}{(p_2)_{-}}-\frac{1}{(p_3)_{+}}$ and $|\Omega_\infty^i|=0$, $i=1,2,3$, then there exist some $C>0$ such that
\[
\Big|\lambda^n\int_{\bbR^{n}}e^{-\lambda^2\xi^2}M(\xi)d\xi\Big|\leq C\|M\|_{p_1(\cdot),p_2(\cdot),p_3(\cdot)}\lambda^{\frac{n}{q}},
\]
when $\lambda$ is sufficiently large.
\end{Lemma}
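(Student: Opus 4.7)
The plan is to adapt H\"ormander's classical Gaussian-test argument to the variable-exponent setting, using Proposition~\ref{Cruz1} (and implicitly Proposition~\ref{Cruz2}) to convert pointwise modular estimates into norm bounds. Take $\hat f(\xi)=\hat g(\xi)=e^{-2\lambda^2|\xi|^2}$, so that $f(x)=\gamma_n\lambda^{-n}e^{-\pi^2|x|^2/(2\lambda^2)}$ for a dimensional constant $\gamma_n$. Strictly speaking, $\hat f$ is not compactly supported, but this is rectified by truncating with a smooth cutoff $\phi(\cdot/R)$ and sending $R\to\infty$: dominated convergence (using $M\in L^1$ together with the Gaussian factor) and density of Fourier-compactly-supported functions in $L^{p_i(\cdot)}$ give the conclusion for $f,g$ themselves. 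Applying the change of variables $u=\xi-\eta,\ v=\xi+\eta$ (Jacobian $2^{-n}$) and using $|\xi|^2+|\eta|^2=(|u|^2+|v|^2)/2$, the defining integral of $B_M(f,g)$ separates into two Gaussian integrals, producing
\[
B_M(f,g)(x)=c_n\lambda^{-n}I(\lambda^2)\,e^{-\pi^2|x|^2/\lambda^2},\qquad I(\lambda^2)=\int_{\bbR^n}e^{-\lambda^2|u|^2}M(u)\,du,
\]
for another dimensional constant $c_n$.

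We next estimate the three relevant norms. For the upper bound of $\|f\|_{p_1(\cdot)}$, set $\mu=C\lambda^{n/(p_1)_{-}-n}$ with $C$ large. Using $p_1(x)\ge(p_1)_{-}$ and $\lambda\ge 1$, each factor of $(f/\mu)^{p_1(x)}$ is bounded above: $\lambda^{-np_1(x)/(p_1)_{-}}\le\lambda^{-n}$, $(\gamma_n/C)^{p_1(x)}\le(\gamma_n/C)^{(p_1)_{-}}$ (for $C\ge\gamma_n$), and $e^{-\pi^2 p_1(x)|x|^2/(2\lambda^2)}\le e^{-\pi^2(p_1)_{-}|x|^2/(2\lambda^2)}$; integrating yields $\rho_{p_1(\cdot)}(f/\mu)\le K(\gamma_n/C)^{(p_1)_{-}}$ for some $K=K((p_1)_{-},n)$, which is $\le 1$ for $C$ large enough. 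By Proposition~\ref{Cruz1} this gives $\|f\|_{p_1(\cdot)}\le C\lambda^{n/(p_1)_{-}-n}$, with the analogous bound for $\|g\|_{p_2(\cdot)}$. For the lower bound of $\|B_M(f,g)\|_{p_3(\cdot)}=c_n|I(\lambda^2)|\,\|F_\lambda\|_{p_3(\cdot)}$, where $F_\lambda(x)=\lambda^{-n}e^{-\pi^2|x|^2/\lambda^2}$, a mirror argument with $\nu=c_1\lambda^{n/(p_3)_{+}-n}$ uses $p_3(x)\le(p_3)_{+}$ (to reverse the $\lambda$-power and Gaussian-decay bounds) and $p_3(x)\ge(p_3)_{-}$ (for the constant factor) to produce $\rho_{p_3(\cdot)}(F_\lambda/\nu)\ge K'(1/c_1)^{(p_3)_{-}}>1$ for $c_1$ small, whence $\|F_\lambda\|_{p_3(\cdot)}\ge c_1\lambda^{n/(p_3)_{+}-n}$ again by Proposition~\ref{Cruz1}.

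Combining these with the multiplier inequality $\|B_M(f,g)\|_{p_3(\cdot)}\le\|M\|_{p_1(\cdot),p_2(\cdot),p_3(\cdot)}\|f\|_{p_1(\cdot)}\|g\|_{p_2(\cdot)}$ produces
\[
|I(\lambda^2)|\,\lambda^{n/(p_3)_{+}-n}\le C\|M\|\,\lambda^{n/(p_1)_{-}+n/(p_2)_{-}-2n},
\]
which rearranges directly to $\lambda^n|I(\lambda^2)|\le C\|M\|\lambda^{n/q}$, as desired. The main technical hurdle is the initial reduction from Gaussian Fourier transforms to the Fourier-compactly-supported inputs demanded by the definition of $B_M$, since translation-invariant tricks available in the classical case are unavailable here; everything else is careful bookkeeping, the key point being to use the \emph{smallest} exponent $(p_i)_{-}$ in the input norms and the \emph{largest} exponent $(p_3)_{+}$ in the output norm, so that the three powers of $\lambda$ combine exactly to $n/q$.
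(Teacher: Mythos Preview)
Your argument is correct and follows essentially the same route as the paper's proof: apply $\mathrm{B}_M$ to the Gaussian pair $\hat f=\hat g=e^{-2\lambda^2|\cdot|^2}$, use the rotation $(\xi,\eta)\mapsto(\xi-\eta,\xi+\eta)$ to factor the result as a scaled Gaussian times $\int e^{-\lambda^2|u|^2}M(u)\,du$, and then control the three relevant variable-exponent norms by powers of $\lambda$ with the extremal exponents $(p_1)_-$, $(p_2)_-$, $(p_3)_+$. The only cosmetic difference is in how the norm bounds are extracted: the paper first bounds the modulars $\rho_{p_i(\cdot)}$ above and below by constants times $\lambda^n$, invokes Proposition~\ref{Cruz1} to ensure the norms exceed $1$ for large $\lambda$, and then applies Proposition~\ref{Cruz2} to pass from modular to norm; you instead plug a candidate $\mu$ (respectively $\nu$) directly into the modular and read off the norm bound from the unit-ball definition. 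Both yield the same inequalities. Your remark on the truncation needed to pass from the definition of $\mathrm{B}_M$ (which requires compactly supported $\hat f,\hat g$) to genuine Gaussians is a point the paper leaves implicit; note that the integrability of $M$ you invoke there is not part of the lemma's hypotheses but \emph{is} assumed in Theorem~\ref{necessary}, the only place the lemma is used.
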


 \begin{proof}
 Let $\lambda>0$. Define $G_{\lambda}$ by $\hat{G_{\lambda}}(\xi)=e^{-2 \lambda^2\xi^2}$. By a simple change of variable, one gets that
\begin{eqnarray} \label{eq:1}
\nonumber \mathrm{B}_{M}(G_\lambda,G_\lambda)(x)&=&\frac{1}{2}\int_{\bbR^{2n}}e^{ -\lambda^2\nu^2}e^{ -\lambda^2\mu^2}M(\nu)e^{2 \pi i\mu x}d\mu d\nu\\ &=&\frac{C}{\lambda^n}e^{-\pi^2|\frac{x}{\lambda}|^2}\int_{\bbR^{n}}e^{-\lambda^2\nu^2}M(\nu)d\nu,
\end{eqnarray}
where we use the fact that $G_{\lambda}(x)=(e^{-2
  \lambda^2\xi^2})^{\vee}=\frac{C}{\lambda^n}e^{-\frac{\pi^2}{2}|\frac{x}{\lambda}|^2}$.

  Observe that
  \begin{eqnarray*}
 \rho_{p_i(\cdot)}(e^{-\frac{\pi^2}{2}|\frac{x}{\lambda}|^2})&=&\int_{\bbR^{n}}e^{-\frac{\pi^2}{2}|\frac{x}{\lambda}|^2p_i(x)}dx\\
  &=&\lambda^{n}\int_{\bbR^{n}}e^{-\frac{\pi^2}{2}|u|^2p_i(\lambda u)}du\\
  &\leq&\lambda^{n}\int_{\bbR^{n}}e^{-\frac{\pi^2}{2}|u|^2(p_i)_{-}}du\\
  &=&C_{(p_i)_{-}}\lambda^n,
  \end{eqnarray*}
where $i=1,2$.

Similarly we have
  \begin{displaymath}
  \quad \quad \rho_{p_i(\cdot)}(e^{-\frac{\pi^2}{2}|\frac{x}{\lambda}|^2})\geq C_{(p_i)_{+}}\lambda^n,\quad i=1,2.
  \end{displaymath}
By Proposition~\ref{Cruz1}, we get $\|e^{-\frac{\pi^2}{2}|\frac{x}{\lambda}|^2}\|_{p_i(\cdot)}>1$, when $\lambda$ is sufficiently large.
Thus by Proposition~\ref{Cruz2}, we have
\[
{\rho_{p_i(\cdot)}(e^{-\frac{\pi^2}{2}|\frac{x}{\lambda}|^2})}^{\frac{1}{(p_i)_{+}}}\leq\|e^{-\frac{\pi^2}{2}|\frac{x}{\lambda}|^2}\|_{p_i(\cdot)}\leq{\rho_{p_i(\cdot)}(e^{-\frac{\pi^2}{2}|\frac{x}{\lambda}|^2})}^{\frac{1}{(p_i)_{-}}}.
\]
So
\begin{equation}\label{eq:2}
C_{(p_i)_+}\lambda^{n/(p_i)_{+}-n}\leq \|G_\lambda\|_{p_i(\cdot)}\leq C_{(p_i)_-}\lambda^{n/(p_i)_{-}-n},
\end{equation}
where $i=1,2$.

Similarly we can get
  \begin{equation}\label{eq:3}
  C_{(p_3)_+}\lambda^{n/(p_3)_{+}-n}\leq \|\frac{1}{\lambda^n}e^{-{\pi^2}|\frac{x}{\lambda}|^2}\|_{p_3(\cdot)}\leq C_{(p_3)_-}\lambda^{n/(p_3)_{-}-n}.
  \end{equation}
All the inequalities above are established when the $\lambda$ is sufficiently large.

 By the assumption, we have
 \begin{equation} \label{eq:4}
  \|\mathrm{B}_M(G_\lambda,G_\lambda)\|_{p_3(\cdot)}\leq \|M\|_{p_1(\cdot),p_2(\cdot),p_3(\cdot)}\|G_\lambda\|_{p_1(\cdot)}\|G_\lambda\|_{p_2(\cdot)}.
  \end{equation}

 Now combining (\ref{eq:1}), (\ref{eq:2}), (\ref{eq:3}) and (\ref{eq:4}), we get
\begin{eqnarray*}
C_{(p_3)_+}\lambda^{\frac{n}{(p_3)_{+}}-n}\Big|\int_{\bbR^{n}}e^{-\lambda^2\xi^2}M(\xi)d\xi\Big| &\leq & \|\mathrm{B}_M(G_\lambda,G_\lambda)\|_{p_3(\cdot)}\\
&\leq& C\|M\|_{p_1(\cdot),p_2(\cdot),p_3(\cdot)}\lambda^{\frac{n}{(p_1)_-}-n}\lambda^{\frac{n}{{(p_2)_-}}-n}.
\end{eqnarray*}

Hence
\[\Big|\lambda^n \int_{\bbR^{n}}e^{-\lambda^2\xi^2}M(\xi)d\xi\Big|\leq C\|M\|_{p_1(\cdot),p_2(\cdot),p_3(\cdot)}\lambda^{\frac{n}{q}},
\]
when $\lambda$ is sufficiently large.
 \end{proof}

 We are now ready to prove Theorem~\ref{necessary}.

 \begin{proof}[Proof of Theorem~\ref{necessary}]
 Assume that $\frac{1}{(p_1)_{-}}+\frac{1}{(p_2)_{-}}<\frac{1}{(p_3)_{+}}$.
 By a simple calculation, we obtain that
 \begin{displaymath}
 \mathrm{B}_M(\mathrm{M}^{y}f,\mathrm{M}^{-y}g)=\mathrm{B}_{\mathrm{T}_{-2y}M}(f,g),
 \end{displaymath}
 where $\mathrm{T}_{-2y}M=M(x+2y)$.
 Thus $\mathrm{T}_{-2y}M \in \tilde{BM}(\bbR^{2n})$$(p_1(\cdot),p_2(\cdot),p_3(\cdot))$. Applying Lemma~\ref{L2} to $\mathrm{T}_{-2y}M$, we get
\begin{displaymath}
\Big|\lambda^n\int_{\bbR^{n}}e^{-\lambda^2\xi^2}M(\xi+2y)d\xi\Big| \leq C\|M\|_{p_1(\cdot),p_2(\cdot),p_3(\cdot)}\lambda^{\frac{n}{q}}.
 \end{displaymath}
 Observe that $\frac{1}{q}=\frac{1}{(p_1)_{-}}+\frac{1}{(p_2)_{-}}-\frac{1}{(p_3)_{+}}<0$ and $M$ is continuous. By letting $\lambda \to \infty$, we have
 \begin{displaymath}
 \lim_{\lambda \rightarrow \infty}\Big|\lambda^n\int_{\bbR^{n}}e^{-\lambda^2\xi^2}M(\xi+2y)d\xi\Big|={\pi}^{\frac{n}{2}}|M(2y)|=0.
 \end{displaymath}
Since $y$ is arbitrary, we have $M=0$. This is a contradiction.
Thus
\begin{displaymath}
\frac{1}{(p_3)_{+}}\leq\frac{1}{(p_1)_{-}}+\frac{1}{(p_2)_{-}}.
\end{displaymath}
\end{proof}

\section{The Mihlin-H\"omander type estimate for multilinear multipliers on weighted variable exponent lebesgue spaces}
Roughly speaking, in the linear case, by adding the condition that the Hardy-Littlewood maximal function is bounded on weighted variable spaces, the results of multipliers on weighted variable spaces can be derived from the weighted multiplier theorem on classical Lebesgue spaces and the Extrapolation theorem on weighted variable spaces. See, for example, \cite [Theorem 4.5, Theorem 4.7]{MR2499882}, \cite{one-multiplier}, and \cite{one-multiplier1}.

However, in the multilinear case, the method faces some challenges. One problem is that we have no multilinear Extrapolation theorem on spaces with variable exponents yet, though the counterpart on classical Lebesgue spaces appeared early, see \cite{multi-extra}.

We give another way to get the Mihlin-H\"ormander conditions for multilinear Fourier multipliers on weighted variable spaces.

Recall that the Hardy-Littlewood maximal function is defined by
\begin{displaymath}
\mathcal{M}(f)(x)=\sup_{Q\ni x}\frac{1}{|Q|}\int_Q|f(y)|dy.
\end{displaymath}
And the sharp maximal function is defined by
\begin{displaymath}
\mathrm{M}^{\#}(f)(x)=\sup_{Q\ni x}\inf_{c\in \bbR}\frac{1}{|Q|}\int_Q|f(y)-c|dy.
\end{displaymath}
For $\delta>0$, we also define
\begin{displaymath}
\mathrm{M}_\delta(f)=\mathcal{M}(|f|^\delta)^{1/\delta}\quad and \quad \mathrm{M}_\delta^{\#}(f)=\mathrm{M}^{\#}(|f|^\delta)^{1/\delta}.
\end{displaymath}
For $\vec{f}=(f_1,\cdots,f_N)$ and $p \geq1$, we define
\[
\mathcal{M}_p(\vec{f})(x)=\sup_{Q\ni x}\prod_{i=1}^{N}\Big(\frac{1}{|Q|}\int_Q|f_i(y_i)|^pdy_i\Big)^{1/p}.
\]

\begin{Definition}[\cite{MR2483720}]
Given $\vec{P}=(p_1,\cdots,p_N)$ with $1\leq p_1,\cdots,p_N<\infty$ and $1/{p_1}+\cdots+1/{p_N}=1/p$. Let $\vec{w}=(w_1,\cdots,w_N)$. Set
\[
v_{\vec{w}}=\prod_{i=1}^{N}w_i^{p/{p_i}}.
\]
We say that $\vec{w}$ satisfies the $A_{\vec{P}}$ condition if
\[
\sup_Q\Big(\frac{1}{|Q|}\int_Q v_{\vec{w}}\Big)^{1/p} \prod_{i=1}^{N}\Big( \frac{1}{|Q|}\int_Q w_i^{1-p_i^\prime}\Big)^{1/{p_i^\prime}}<\infty.
\]
When $p_i=1$, then $\Big(\frac{1}{|Q|}\int_Q w_i^{1-p_i^\prime}\Big)^{1/{p_i^\prime}}$ is understood as $(\inf_Q w_i)^{-1}$.
\end{Definition}

We now give a Mihlin-H\"ormander type theorem for multilinear Fourier multipliers on weighted variable exponent Lebesgue spaces.

\begin{Theorem}\label{hormander}
Suppose that $Nn/2<s\leq Nn$, $m\in L^{\infty}(\bbR^{Nn})$ and
\begin{displaymath}
\sup_{R>0}\|m(R\xi)\chi_{\{1<|\xi|<2\}}\|_{H^s(\bbR^{Nn})}<\infty.
\end{displaymath}
Set $r_0:=Nn/s$, a series of variable indexes $p_1(x),\cdots,p_N(x)\in \mathcal{P}(\bbR^n)$, and $p(x)\in \mathcal{P}^0(\bbR^n)$, such that $\frac{1}{p_1(x)}+\frac{1}{p_2(x)}+\cdots+\frac{1}{p_N(x)}=\frac{1}{p(x)}$, where $(p_j)_->r_0$,$j=1,2,\cdots,N$. Suppose there are $0<q<p_-$, $r_0<q_j<(p_j)_-$ such that the Hardy-Littlewood maximal operator $\mathcal{M}$ is bounded on $L^{\tilde{p}^{\prime}(\cdot)}((w_1\cdots w_N)^{-q\tilde{p}^{\prime}(\cdot)})
$ and $L^{\tilde{p_j}^{\prime}(\cdot)}({w_j}^{-q_j\tilde{p_j}^{\prime}(\cdot)})$, where $\tilde{p}(x)=\frac{p(x)}{q}$, $\tilde{p_j}(x)=\frac{p_j(x)}{q_j}$, $j=1,2,\cdots,N$. Then there exist some $C>0$ such that
\begin{displaymath}
\|\mathrm{T}_m(\vec{f})\|_{L^{p(\cdot)}(w_1^{p(\cdot)}\cdots w_N^{p(\cdot)} )}\leq C \prod_{i=1}^{N}\|f_i\|_{L^{p_i(\cdot)}(w_i^{p_i(\cdot)})}.
\end{displaymath}
\end{Theorem}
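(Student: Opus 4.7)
The overall strategy is a three-step reduction: first pass from $T_m(\vec f)$ to its sharp maximal function via a pointwise control, then use a Fefferman--Stein type inequality on weighted variable Lebesgue spaces to bound $T_m(\vec f)$ by the multi-sublinear maximal operator $\mathcal{M}_{r_0}(\vec f)$, and finally apply a vector-valued Hölder inequality together with the weighted-variable boundedness of $\mathcal{M}_{r_0}$ (which we will extract from the hypothesis on the dual spaces).

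\textbf{Step 1 (Pointwise sharp maximal estimate).} Since $s>Nn/2$ and $r_0=Nn/s<2$, the Sobolev-space condition on $m$ places us inside the regime of the Li--Sun / Bui--Duong Hörmander-type theory. The key pointwise estimate we will invoke is that for every $0<\delta<1/N$,
\[
M^{\#}_{\delta}\bigl(T_m(\vec f)\bigr)(x)\;\leq\; C\,\mathcal{M}_{r_0}(\vec f)(x),
\]
valid for smooth compactly supported $f_i$. This kind of estimate is essentially the same one used in \cite{MR3161113,MR3007100}; it is independent of the weight structure and relies only on the Sobolev regularity of $m$. I would either cite it directly or sketch the reduction to the dyadic decomposition $m=\sum_j m_j$ with $m_j$ supported in $\{2^{j-1}\le|\xi|\le 2^{j+1}\}$ and the standard kernel estimates derived from the Sobolev control.

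\textbf{Step 2 (Fefferman--Stein on weighted variable spaces).} Set $v:=w_1^{p(\cdot)}\cdots w_N^{p(\cdot)}$ and $\tilde p(x)=p(x)/q$. Because the Hardy--Littlewood maximal operator is bounded on $L^{\tilde p'(\cdot)}\bigl((w_1\cdots w_N)^{-q\tilde p'(\cdot)}\bigr)$, a duality/extrapolation argument on variable Lebesgue spaces (cf.\ the Muckenhoupt-type characterizations in \cite{cruz}) gives the Fefferman--Stein inequality
\[
\|F\|_{L^{p(\cdot)}(v)}\;\leq\; C\,\|M^{\#}_{\delta} F\|_{L^{p(\cdot)}(v)},
\]
for $\delta<q<p_-$ and $F$ with $\|F\|_{L^{p(\cdot)}(v)}<\infty$ (a standard truncation reduces to this case). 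Combining with Step 1 yields
\[
\|T_m(\vec f)\|_{L^{p(\cdot)}(v)}\;\leq\; C\,\bigl\|\mathcal{M}_{r_0}(\vec f)\bigr\|_{L^{p(\cdot)}(v)}.
\]

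\textbf{Step 3 (Splitting $\mathcal{M}_{r_0}(\vec f)$ and closing the estimate).} Using the trivial pointwise majorization $\mathcal{M}_{r_0}(\vec f)(x)\le\prod_{i=1}^{N}\mathcal{M}_{r_0}(f_i)(x)$ and the variable-exponent Hölder inequality from \cite{cruz} (permissible because $\sum 1/p_i(x)=1/p(x)$ and the weights split as $v=\prod w_i^{p(\cdot)}$), we get
\[
\bigl\|\mathcal{M}_{r_0}(\vec f)\bigr\|_{L^{p(\cdot)}(v)}\;\leq\; C\prod_{i=1}^{N}\bigl\|\mathcal{M}_{r_0}(f_i)\bigr\|_{L^{p_i(\cdot)}(w_i^{p_i(\cdot)})}.
\]
Since $r_0<q_i<(p_i)_-$ and $\mathcal{M}$ is bounded on $L^{\tilde p_i'(\cdot)}(w_i^{-q_i\tilde p_i'(\cdot)})$ with $\tilde p_i=p_i/q_i$, the same duality/extrapolation principle for variable Lebesgue spaces converts this hypothesis into the boundedness of $\mathcal{M}_{r_0}$ on $L^{p_i(\cdot)}(w_i^{p_i(\cdot)})$ (one writes $\mathcal{M}_{r_0}(f)=\mathcal{M}(|f|^{r_0})^{1/r_0}$ and uses $r_0<q_i$ together with Rubio de Francia-style iteration). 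Plugging in gives the desired bound $\prod_i\|f_i\|_{L^{p_i(\cdot)}(w_i^{p_i(\cdot)})}$.

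\textbf{Expected main obstacle.} The bookkeeping of Step 2 is the delicate part: translating the hypothesis "$\mathcal{M}$ is bounded on $L^{\tilde p'(\cdot)}((w_1\cdots w_N)^{-q\tilde p'(\cdot)})$" into a Fefferman--Stein inequality on $L^{p(\cdot)}(v)$ requires both the correct identification of the dual space of $L^{p(\cdot)/q}(v^{1/q})$ and an application of the variable-exponent extrapolation (or, equivalently, Rubio de Francia's algorithm in the variable setting). The multilinear coupling of the weights, and the fact that $p$ may drop below $1$, make this the place where I would have to check the details most carefully.
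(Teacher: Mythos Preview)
Your three-step outline matches the paper's strategy, but there is a meaningful difference in how Step~2 is executed, and a small technical slip in Step~1.

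\textbf{On Step~2.} You aim to prove a Fefferman--Stein inequality \emph{directly} on $L^{p(\cdot)}(v)$ from the dual-space hypothesis. The paper avoids this entirely. It first establishes, for every \emph{constant} exponent $q$ and every $v\in A_\infty$, the pair inequality
\[
\|T_m(\vec f)\|_{L^q(v)}\le C\,\|\mathcal{M}_{p_0}(\vec f)\|_{L^q(v)},
\]
using only the classical Fefferman--Stein inequality (Proposition~\ref{pro1}) combined with the pointwise sharp-maximal control (Proposition~\ref{pro2}). Then it feeds this family of pair inequalities into the Kokilashvili--Samko extrapolation theorem (Proposition~\ref{pro3}), whose hypothesis is precisely the boundedness of $\mathcal{M}$ on $L^{\tilde p'(\cdot)}((w_1\cdots w_N)^{-q\tilde p'(\cdot)})$. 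The same mechanism is used again in Step~3: the classical weighted bound $\|\mathcal{M}_{p_0}f\|_{L^{q_j}(w)}\le C\|f\|_{L^{q_j}(w)}$ for $w\in A_{q_j/p_0}$ (Proposition~\ref{pro4}) is extrapolated via Proposition~\ref{pro3} to $L^{p_j(\cdot)}(w_j^{p_j(\cdot)})$. So the paper never needs a variable-exponent Fefferman--Stein inequality or a variable-exponent Rubio de Francia algorithm; the only variable-exponent tool is the extrapolation theorem, applied twice to pairs of functions. Your ``duality/extrapolation'' phrasing would work, but the paper's route makes it transparent why the stated hypotheses on $\mathcal{M}$ are exactly what is needed.

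\textbf{On Step~1.} The pointwise estimate from \cite{MR3161113} used in the paper (Proposition~\ref{pro2}) is $M_\delta^{\#}(T_m\vec f)\le C\,\mathcal{M}_{p_0}(\vec f)$ with $p_0=rr_0$ for some $r>1$, not with $r_0$ itself; the constraint is $p_0<q_j$, which is satisfied by choosing $r$ close enough to $1$. Your version with $\mathcal{M}_{r_0}$ is slightly too sharp as stated, though the fix is cosmetic.
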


Before proving the theorem, we present some preliminary results. The following inequality is a classical result of Fefferman and Stein \cite{MR0447953}.

\begin{Proposition}[\cite{MR0447953}]\label{pro1}
Let $0<\delta<p<\infty$ and $w \in A_\infty$. Then there exist some constants $C_{n,p,\delta,w}>0$ such that
\begin{displaymath}
\int_{\bbR^n}(\mathrm{M}_\delta f)(x)^pw(x)dx \leq C_{n,p,\delta,w}\int_{\bbR^n}(\mathrm{M}_\delta^{\#} f)(x)^pw(x)dx.
\end{displaymath}
\end{Proposition}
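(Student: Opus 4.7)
The plan is to reduce the stated inequality to the classical unweighted-to-weighted Fefferman--Stein inequality for $M$ and $M^{\#}$ via a simple substitution, and then carry out the standard good-$\lambda$ argument adapted to $A_\infty$ weights. First, I would set $g=|f|^\delta$ and $q=p/\delta$. By the very definition of $M_\delta$ and $M_\delta^{\#}$, the desired inequality is equivalent to
\begin{equation*}
\int_{\bbR^n}(\mathcal{M}g)(x)^q w(x)\,dx\le C\int_{\bbR^n}(\mathrm{M}^{\#}g)(x)^q w(x)\,dx,
\end{equation*}
with $1<q<\infty$ and $w\in A_\infty$. Thus one may assume $\delta=1$ from the start.

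Next, the heart of the matter is a good-$\lambda$ inequality: there exist $\varepsilon>0$ (depending only on $w$) and, for every $\gamma\in(0,1)$, a constant $C>0$ such that for all $\lambda>0$,
\begin{equation*}
w\bigl(\{\mathcal{M}g>2\lambda,\ \mathrm{M}^{\#}g\le\gamma\lambda\}\bigr)\le C\gamma^{\varepsilon}\, w\bigl(\{\mathcal{M}g>\lambda\}\bigr).
\end{equation*}
I would prove this by performing a Calder\'on--Zygmund/Whitney decomposition of the open set $\{\mathcal{M}g>\lambda\}$ into a disjoint family of maximal dyadic cubes $\{Q_j\}$. On each $Q_j$, one uses a local argument (comparing $g$ with its average $c_{Q_j}$ on a slightly enlarged cube, invoking the definition of $\mathrm{M}^{\#}$, and using weak-type $(1,1)$ of $\mathcal M$) to show that the set $E_j:=Q_j\cap\{\mathcal{M}g>2\lambda,\ \mathrm{M}^{\#}g\le\gamma\lambda\}$ satisfies $|E_j|\le C\gamma\,|Q_j|$. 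Then the $A_\infty$ characterization of $w$, namely the existence of $\varepsilon>0$ with $w(E)/w(Q)\le C(|E|/|Q|)^{\varepsilon}$ for all measurable $E\subset Q$, upgrades this to $w(E_j)\le C\gamma^{\varepsilon}w(Q_j)$. Summing in $j$ and using $\sum_j w(Q_j)\le w(\{\mathcal{M}g>\lambda\})$ yields the good-$\lambda$ inequality.

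Finally, I would integrate the good-$\lambda$ inequality against $q\lambda^{q-1}\,d\lambda$ to obtain
\begin{equation*}
\int_0^\infty q\lambda^{q-1}w(\{\mathcal{M}g>2\lambda\})\,d\lambda\le C\gamma^{\varepsilon}\int_0^\infty q\lambda^{q-1}w(\{\mathcal{M}g>\lambda\})\,d\lambda+C_{\gamma}\!\int(\mathrm{M}^{\#}g)^q w\,dx,
\end{equation*}
which after rescaling in $\lambda$ gives $\|\mathcal{M}g\|_{L^q(w)}^q\le 2^q C\gamma^{\varepsilon}\|\mathcal{M}g\|_{L^q(w)}^q+C_\gamma\|\mathrm{M}^{\#}g\|_{L^q(w)}^q$. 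Choosing $\gamma$ small enough that $2^q C\gamma^{\varepsilon}<1/2$ and absorbing the first term on the right into the left side gives the result, provided we know a priori that $\|\mathcal{M}g\|_{L^q(w)}<\infty$. To justify this absorption, I would first run the argument with $\min(\mathcal{M}g,N)$ in place of $\mathcal{M}g$ (or equivalently approximate $g$ by bounded compactly supported truncations) and let $N\to\infty$ at the end by monotone convergence.

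The main obstacle is the good-$\lambda$ inequality itself: establishing the local bound $|E_j|\le C\gamma|Q_j|$ requires delicate use of the stopping-time property of the Whitney cubes (so that the average of $g$ on a parent cube is controlled by $\lambda$), together with the definition of $\mathrm{M}^{\#}$ to replace $g$ by $g-c_{Q_j}$. Everything else is then machinery: the $A_\infty$ self-improvement property transfers the Lebesgue measure estimate to $w$, and the layer-cake integration is routine.
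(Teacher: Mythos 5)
The paper offers no proof of this proposition: it is quoted directly from Fefferman and Stein \cite{MR0447953} as a known classical result, so there is no internal argument to compare yours against. Your outline is the standard proof of that cited result and is essentially correct: the substitution $g=|f|^{\delta}$, $q=p/\delta>1$ reduces to the case $\delta=1$; the good-$\lambda$ inequality is obtained from a Whitney decomposition of the open set $\{\mathcal{M}g>\lambda\}$, a local weak $(1,1)$ estimate applied to $(g-c_{Q_j})\chi_{Q_j^{*}}$ together with the definition of $\mathrm{M}^{\#}$ to get $|E_j|\le C\gamma|Q_j|$, and the $A_\infty$ comparison $w(E)/w(Q)\le C(|E|/|Q|)^{\varepsilon}$ to upgrade this to $w$; integrating against $q\lambda^{q-1}\,d\lambda$ and absorbing finishes the argument. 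Two caveats. First, with the uncentred Hardy--Littlewood maximal operator the good-$\lambda$ inequality is most naturally proved at a threshold $A\lambda$ with a dimensional constant $A=A(n)$ rather than $2\lambda$ (or with the dyadic maximal function), since one must dominate the contribution to $\mathcal{M}g(x)$, $x\in Q_j$, from cubes much larger than $Q_j$ by a fixed multiple of $\lambda$ using the nearby point where $\mathcal{M}g\le\lambda$; this is cosmetic. Second, and more substantively, the absorption step requires a qualitative a priori hypothesis that the paper's statement (and the bare truncation $\min(\mathcal{M}g,N)$) does not supply: one needs something like $\mathrm{M}_{\delta}f\in L^{q_0}(w)$ for some $q_0<p$, which via Chebyshev makes the truncated distributional integral finite. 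Without such a hypothesis the inequality is actually false --- take $f$ a nonzero constant, so that $\mathrm{M}_{\delta}^{\#}f=0$ while $\mathrm{M}_{\delta}f$ is a nonzero constant and every nontrivial $A_\infty$ weight has infinite total mass. In the paper's application $f=\mathrm{T}_m(\vec{f})$ with $\hat{f_j}\in C_c^{\infty}$, so the hypothesis holds and the omission is harmless, but your write-up should state it explicitly rather than rely on the truncation alone.
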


 The next result comes from  Lemma~2.6 in \cite{MR3161113}. For our purpose we restate it in the proper way.

\begin{Proposition}[\cite{MR3161113}]\label{pro2}
Let $1<r<min\{\frac{s}{(s-1)},\frac{2s}{Nn}\} $ such that $p_0:=rr_0<q_j$, $j=1,\cdots,N $. If $0<\delta<p_0/N$, then under the assumption of Theorem~\ref{hormander}, there exist some $C>0$ such that for all $\vec{f} \in L^{t_1}(\bbR^n)$$\times\cdots \times $$L^{t_N}(\bbR^n)$, $p_0 \leq t_1,\cdots, t_N<\infty$, we have
\begin{displaymath}
\mathrm{M}_\delta^{\#}(\mathrm{T}_m{\vec{f}})\leq C\mathcal{M}_{p_0}(\vec{f}).
\end{displaymath}
\end{Proposition}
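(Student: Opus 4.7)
The plan is to adapt the standard Fefferman--Stein sharp maximal function argument for multilinear Calder\'on--Zygmund operators to the Sobolev--H\"ormander setting, combining the $L^{p_0}$-type boundedness of $\mathrm{T}_m$ (Tomita / Grafakos--Si) with kernel oscillation estimates coming from the $H^s$ hypothesis on the dilates of $m$. Fix $x\in\bbR^n$ and a cube $Q\ni x$; the aim is to bound
\[
\inf_{c}\frac{1}{|Q|}\int_Q\bigl||\mathrm{T}_m\vec f(y)|^{\delta}-c\bigr|\,dy\leq C\bigl[\mathcal{M}_{p_0}\vec f(x)\bigr]^{\delta},
\]
after which taking a $\delta$-th root and the supremum over $Q$ yields the claim.

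\textbf{Step 1 (decomposition).} Set $Q^{*}=3\sqrt{n}\,Q$ and split $f_i=f_i^{0}+f_i^{\infty}$ with $f_i^{0}=f_i\chi_{Q^{*}}$. Expand
\[
\mathrm{T}_m(f_1,\ldots,f_N)=\mathrm{T}_m(f_1^{0},\ldots,f_N^{0})+\sum_{\alpha\in\{0,\infty\}^N,\ \alpha\neq\vec 0}\mathrm{T}_m(f_1^{\alpha_1},\ldots,f_N^{\alpha_N}),
\]
and take $c=\bigl|\sum_{\alpha\neq\vec 0}\mathrm{T}_m(\vec f^{\alpha})(x_Q)\bigr|^{\delta}$ for some reference point $x_Q\in Q$.

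\textbf{Step 2 (local term).} Since $s>Nn/2$ and $r<2s/(Nn)$, the Tomita/Grafakos--Si multilinear H\"ormander theorem gives $\mathrm{T}_m\colon L^{p_0}\times\cdots\times L^{p_0}\to L^{p_0/N}$. Because $0<\delta<p_0/N$, a Jensen/Kolmogorov-type inequality yields
\[
\frac{1}{|Q|}\int_Q|\mathrm{T}_m(\vec f^{0})(y)|^{\delta}\,dy\leq C\prod_{i=1}^{N}\Big(\frac{1}{|Q^{*}|}\int_{Q^{*}}|f_i|^{p_0}\Big)^{\delta/p_0}\leq C\bigl[\mathcal{M}_{p_0}\vec f(x)\bigr]^{\delta}.
\]

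\textbf{Step 3 (off-diagonal terms).} For $\alpha\neq\vec 0$, some $f_i^{\alpha_i}$ is supported off $Q^{*}$. Write $\mathrm{T}_m(\vec f^{\alpha})(y)=\int K(y,\vec z)\prod_i f_i^{\alpha_i}(z_i)\,d\vec z$. From $\sup_{R}\|m(R\xi)\chi_{\{1<|\xi|<2\}}\|_{H^s(\bbR^{Nn})}<\infty$, a Littlewood--Paley decomposition $m=\sum_k m\psi_k$ combined with weighted $L^2$ Cauchy--Schwarz yields the dyadic H\"older-type kernel bound
\[
\Big(\int_{|\vec z-\vec x_Q|\sim 2^k\ell(Q)}|K(y,\vec z)-K(x_Q,\vec z)|^{r'}\,d\vec z\Big)^{1/r'}\leq C\,2^{-k\gamma}(2^k\ell(Q))^{-Nn/r},\qquad y\in Q,
\]
for some $\gamma>0$, where $r'$ is the dual of $r$. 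Applying H\"older in each shell $2^kQ^{*}$ at exponent $r$ and using $r<p_0$ (so $(\mathrm{avg}|f_i|^r)^{1/r}\lesssim(\mathrm{avg}|f_i|^{p_0})^{1/p_0}$) produces a factor $C(2^k\ell(Q))^{Nn/r}\mathcal{M}_{p_0}\vec f(x)$ which cancels the kernel scaling, so the $k$-th shell contributes $C\,2^{-k\gamma}\mathcal{M}_{p_0}\vec f(x)$. Summing over $k$ gives
\[
|\mathrm{T}_m(\vec f^{\alpha})(y)-\mathrm{T}_m(\vec f^{\alpha})(x_Q)|\leq C\mathcal{M}_{p_0}\vec f(x),\qquad y\in Q,
\]
and the $\delta$-averaged oscillation over $Q$ is controlled by $C[\mathcal{M}_{p_0}\vec f(x)]^{\delta}$.

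\textbf{Main obstacle.} The technical heart is the kernel H\"older regularity in Step 3: one must convert the $H^s$ hypothesis on the dilates of $m$ into a pointwise/weighted kernel estimate with enough decay ($Nn+\gamma$) and enough smoothness ($\gamma$-H\"older in the first variable) to sum the dyadic tails. The openness of the window $r\in(1,\min(s/(s-1),2s/(Nn)))$ is exactly what balances two competing constraints: $r<2s/(Nn)$ keeps $p_0=rr_0<2$ so the Tomita/Grafakos--Si bound applies at the balanced target $L^{p_0/N}$, while $r<s/(s-1)$ leaves enough fractional Sobolev regularity to ``spend'' on kernel smoothness after one still retains $L^{r'}$ integrability on annuli. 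The assumption $p_0<q_j<(p_j)_{-}$ in Theorem~\ref{hormander} is precisely what will later let this pointwise bound feed into the weighted Fefferman--Stein argument on variable exponent spaces.
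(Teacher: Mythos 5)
The paper does not prove this proposition at all: it is imported verbatim from Li and Sun \cite{MR3161113} (their Lemma~2.6), with only the remark that it is ``restated in the proper way.'' So there is no internal proof to compare your argument against; what can be said is whether your sketch would actually establish the result. Your overall architecture --- split $f_i=f_i\chi_{Q^*}+f_i\chi_{(Q^*)^c}$, control the all-local term by Kolmogorov's inequality plus the Tomita/Grafakos--Si $L^{p_0}\times\cdots\times L^{p_0}\to L^{p_0/N}$ bound (this is where $p_0=rr_0>r_0$ and $\delta<p_0/N$ enter), and control the terms with at least one far factor by an averaged kernel oscillation estimate --- is exactly the strategy of the cited literature (Fujita--Tomita, Bui--Duong, Li--Sun), so the skeleton is right.

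The genuine gap is that Step 3, which you yourself flag as the ``technical heart,'' is asserted rather than proved, and it is the entire content of the lemma. Under the hypothesis $Nn/2<s\leq Nn$ the kernel of $\mathrm{T}_m$ is \emph{not} a standard Calder\'on--Zygmund kernel, and the claimed shell estimate
$\bigl(\int_{|\vec z-\vec x_Q|\sim 2^k\ell(Q)}|K(y,\vec z)-K(x_Q,\vec z)|^{r'}d\vec z\bigr)^{1/r'}\leq C2^{-k\gamma}(2^k\ell(Q))^{-Nn/r}$
with some $\gamma>0$ does not follow from a one-line ``Littlewood--Paley plus weighted Cauchy--Schwarz.'' One must decompose $m=\sum_j m\psi_j$, prove for each dyadic kernel $K_j$ a weighted $L^2$ bound of the form $\int|K_j(\vec y)|^2(1+2^j|\vec y|)^{2s}d\vec y\lesssim 2^{jNn}$ from the uniform $H^s$ hypothesis, convert it to an $L^{r'}$ bound on annuli (this is where the precise inequalities $r<s/(s-1)$ and $r<2s/(Nn)$ are consumed, to make the dual-weight integral converge and to keep $Nn/r<s$), gain the factor $\min\{1,2^j\ell(Q)\}$ for the oscillation, and then verify that the two competing geometric series in $j$ sum to a positive power $2^{-k\gamma}$. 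You have named the right ingredients in your ``main obstacle'' paragraph, but none of these computations is carried out, so as written the proposal does not constitute a proof; it reproduces the statement of the key estimate of \cite{MR3161113} without establishing it. Since the paper treats this proposition as an external citation, the honest options are either to cite \cite{MR3161113} as the paper does, or to supply Step~3 in full.
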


\begin{Proposition}[\cite{MR2499882}]\label{pro3}
Let X be a metric measure space and $\Omega$ an open set in X. Assume that for some $p_0$ and $q_0$, satisfying:
\begin{displaymath}
0<p_0\leq q_0<\infty, p_0<p_-, and \frac{1}{p_0}-\frac{1}{p_+}<\frac{1}{q_0},
\end{displaymath}
and for every weight $w \in A_1(\Omega)$, there holds the inequality
\begin{displaymath}
\Big(\int_\Omega f^{q_0}(x)w(x)d\mu(x)\Big)^{\frac{1}{q_0}}\leq c_0\Big(\int_\Omega g^{p_0}(x)[w(x)]^{\frac{p_0}{q_0}}d\mu(x)\Big)^{\frac{1}{p_0}},
\end{displaymath}
for all $(f,g)$ in a given family $\mathscr{F}$.
Let the variable exponent $q(x)$ be defined by
\begin{displaymath}
\frac{1}{q(x)}=\frac{1}{p(x)}-(\frac{1}{p_0}-\frac{1}{q_0}).
\end{displaymath}
Let the exponent $p(x)$ and the weight $\varrho$ satisfy that $p\in \mathcal{P}^{0}(\Omega)$ and $\mathcal{M}$ is bounded on $L^{\tilde{q}^{\prime}(\cdot)}(\Omega,\varrho^{-q_0\tilde{q}^{\prime}(\cdot)})$.

Then for all $(f,g)\in \mathscr{F}$ with $f\in L^{q(\cdot)}(\Omega,\varrho^{q(\cdot)})$, the inequaltiy
\begin{displaymath}
\|f\|_{L^{q(\cdot)}(\Omega,\varrho^{q(\cdot)})}\leq\|g\|_{L^{p(\cdot)}(\Omega,\varrho^{p(\cdot)})}
\end{displaymath}
is valid with a constant $C>0$.
\end{Proposition}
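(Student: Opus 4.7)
The plan is to prove Theorem~\ref{hormander} in two phases: first, derive a classical (constant-exponent) weighted multilinear estimate for $\mathrm{T}_m$ using the Fefferman--Stein inequality (Proposition~\ref{pro1}) together with the pointwise sharp maximal bound (Proposition~\ref{pro2}); second, transfer this estimate to the weighted variable exponent setting by iterated application of the extrapolation theorem (Proposition~\ref{pro3}).

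For the classical phase, since $(p_j)_->r_0=Nn/s$, I would choose $r\in(1,\min\{s/(s-1),2s/(Nn)\})$ close enough to $1$ that $p_0:=rr_0$ satisfies $p_0<q_j$ for all $j$, and pick $\delta\in(0,p_0/N)$. Proposition~\ref{pro2} supplies the pointwise bound $\mathrm{M}_\delta^{\#}(\mathrm{T}_m\vec f)\le C\,\mathcal{M}_{p_0}(\vec f)$, and feeding this into Proposition~\ref{pro1} for an $A_\infty$ weight $u$ yields $\|\mathrm{T}_m\vec f\|_{L^{q}(u)}\le C\|\mathcal{M}_{p_0}(\vec f)\|_{L^{q}(u)}$. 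Specializing $u=(w_1\cdots w_N)^q$ and reducing $\mathcal{M}_{p_0}$ to a product of linear weighted maximal functions via the substitution $g_i=|f_i|^{p_0}$ (which is tractable since $q_i/p_0>1$ and one can require $w_i^{q_i/p_0}\in A_{q_i/p_0}$), I would obtain the constant-exponent inequality
\begin{equation*}
\|\mathrm{T}_m\vec f\|_{L^{q}((w_1\cdots w_N)^q)}\le C\prod_{i=1}^N\|f_i\|_{L^{q_i}(w_i^{q_i})}
\end{equation*}
valid for each admissible classical weight choice.

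For the variable-exponent phase, I would apply Proposition~\ref{pro3} once for each function $f_j$, freezing the remaining arguments and viewing $f_j\mapsto\mathrm{T}_m\vec f$ as a linear operator with the classical inequality above serving as the starting hypothesis. At the $j$-th lift, set $p_0=q_j$, take the input variable exponent to be $p_j(\cdot)$, and verify the $\mathcal{M}$-boundedness required by Proposition~\ref{pro3}, which is precisely the theorem's hypothesis that $\mathcal{M}$ is bounded on $L^{\tilde p_j{}'(\cdot)}(w_j^{-q_j\tilde p_j{}'(\cdot)})$. A final application, invoking the remaining hypothesis on $L^{\tilde p\,'(\cdot)}((w_1\cdots w_N)^{-q\tilde p\,'(\cdot)})$, lifts the target exponent from the classical $L^{q}$ to the variable $L^{p(\cdot)}$, producing the conclusion.

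The principal technical obstacle is the bookkeeping of intermediate exponents across successive extrapolations: after the first source-index lift the LHS exponent is no longer constant, so Proposition~\ref{pro3} cannot be invoked directly in its stated form at later steps. The clean resolution is to keep the classical starting inequality as a family parametrized by the yet-unlifted arguments---normalizing each $f_i$ by its classical norm so the constant $c_0$ of Proposition~\ref{pro3} does not depend on the frozen data---and to organize the $N+1$ extrapolations so that the $N+1$ conjugate-exponent $\mathcal{M}$-boundedness hypotheses in the theorem are consumed in turn, matching the identities $1/\tilde p(x)+q/p(x)=1$ and $1/\tilde p_j(x)+q_j/p_j(x)=1$ at each step.
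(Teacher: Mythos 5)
Your proposal does not address the statement you were asked to prove. The statement is Proposition~\ref{pro3}, the weighted extrapolation theorem on variable exponent Lebesgue spaces (the passage from a classical $A_1$-weighted inequality for pairs $(f,g)$ in a family $\mathscr{F}$ to the inequality $\|f\|_{L^{q(\cdot)}(\Omega,\varrho^{q(\cdot)})}\leq C\|g\|_{L^{p(\cdot)}(\Omega,\varrho^{p(\cdot)})}$ under the hypothesis that $\mathcal{M}$ is bounded on $L^{\tilde{q}^{\prime}(\cdot)}(\Omega,\varrho^{-q_0\tilde{q}^{\prime}(\cdot)})$). What you have written instead is a strategy for Theorem~\ref{hormander}, the Mihlin--H\"ormander multiplier theorem, \emph{taking Proposition~\ref{pro3} as given}. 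Using the proposition as a black box is the opposite of proving it, so the proposal contains no argument at all for the actual statement: nowhere do you explain how the boundedness of $\mathcal{M}$ on the dual-type weighted space produces an $A_1$ weight to which the classical hypothesis can be applied, which is the entire content of the result. A genuine proof would run a Rubio de Francia iteration, $R h=\sum_{k\geq 0}\mathcal{M}^k h/(2^k\|\mathcal{M}\|^k)$, on the space where $\mathcal{M}$ is assumed bounded, pair $f^{q_0}\varrho^{q_0}$ against a suitable dualizing function $h$, majorize $h$ by $Rh\in A_1$, and then invoke the hypothesized inequality for the pair $(f,g)$ with weight $w=Rh$ before undoing the duality; none of this appears in your text.

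For what it is worth, the paper itself does not reprove Proposition~\ref{pro3} either: it cites Kokilashvili--Samko and only remarks that the hypothesis $p\in\mathcal{P}(\Omega)$ can be relaxed to $p\in\mathcal{P}^{0}(\Omega)$ with essentially no change to the proof. So if anything was expected here, it was that observation, not a multiplier argument. As a secondary point, even judged as a proof of Theorem~\ref{hormander} your sketch diverges from the paper's: the paper applies Proposition~\ref{pro3} once to the pair $(\mathrm{T}_m\vec f,\mathcal{M}_{p_0}\vec f)$ with the product weight, then splits $\mathcal{M}_{p_0}(\vec f)\leq\prod_i\mathcal{M}_{p_0}(f_i)$ and uses variable-exponent H\"older's inequality, applying the proposition again to each single factor $(\mathcal{M}_{p_0}f_j,f_j)$ with $p_0=q_0=q_j$. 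This sidesteps entirely the iterated, one-variable-at-a-time extrapolation you propose, whose bookkeeping obstacle you acknowledge but do not actually resolve.
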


\begin{Remark}
Note that the condition $p\in \mathcal{P}(\Omega)$ in the Extrapolation theorem of \cite{MR2499882} can be released to $p\in \mathcal{P}^0(\Omega)$ with nearly no modification to the proof.
\end{Remark}

\begin{Proposition}[{\cite[Proposition 2.3]{MR3007100}}]\label{pro4}
Let $p_0\geq1$ and $p_i>p_0$ for $i=1,\cdots,N$ and $1/{p_1}+\cdots+1/{p_N}=1/p$. Then the inequality
\[
\|\mathcal{M}_{p_0}\vec{(f)}\|_{L^{p}(v_{\vec{w}})}\leq C\prod_{i=1}^N\|f_i\|_{L^{p_i}(w_i)}
\]
holds if and only if $\vec{w}\in A_{\vec{P}/{p_0}}$, where $\vec{P}/{p_0}=(p_1/{p_0},\cdots,p_N/{p_0})$.
\end{Proposition}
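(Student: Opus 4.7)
The plan is to reduce the claim to the $p_0 = 1$ case, which is the two-sided characterization of the multilinear Hardy–Littlewood maximal function due to Lerner, Ombrosi, P\'erez, Torres, and Trujillo-Gonz\'alez (reference \cite{MR2483720}, quoted at the start of this section as the source of the $A_{\vec P}$ classes). The reduction is a simple rescaling by $p_0$.

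Set $g_i = |f_i|^{p_0}$ for $i = 1, \dots, N$, and put $q_i = p_i / p_0$ and $q = p/p_0$. Since $p_0 \geq 1$ and $p_i > p_0$, each $q_i > 1$ and, from $1/p_1 + \cdots + 1/p_N = 1/p$, one gets $1/q_1 + \cdots + 1/q_N = 1/q$. Moreover the weight is invariant under this rescaling:
\[
\prod_{i=1}^{N} w_i^{q/q_i} \;=\; \prod_{i=1}^{N} w_i^{p/p_i} \;=\; v_{\vec w},
\]
so the $A_{\vec P/p_0}$ condition on $\vec w$ (relative to the original $\vec P$) is literally the $A_{\vec Q}$ condition on $\vec w$ with $\vec Q = (q_1, \dots, q_N)$.

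The two norms transform cleanly. From the definition of $\mathcal{M}_{p_0}$ one has $\mathcal{M}_{p_0}(\vec f)^{p_0} = \mathcal{M}_1(\vec g)$, so
\[
\|\mathcal{M}_{p_0}(\vec f)\|_{L^{p}(v_{\vec w})}^{\,p_0} \;=\; \|\mathcal{M}_1(\vec g)\|_{L^{q}(v_{\vec w})},
\qquad
\|f_i\|_{L^{p_i}(w_i)}^{\,p_0} \;=\; \|g_i\|_{L^{q_i}(w_i)}.
\]
Consequently the inequality in the statement is equivalent (with the constant $C$ replaced by $C^{p_0}$) to
\[
\|\mathcal{M}_1(\vec g)\|_{L^{q}(v_{\vec w})} \;\leq\; C^{p_0} \prod_{i=1}^{N} \|g_i\|_{L^{q_i}(w_i)}.
\]
By the Lerner--Ombrosi--P\'erez--Torres--Trujillo-Gonz\'alez characterization applied to $\mathcal{M}_1 = \mathcal{M}$ with exponent vector $\vec Q$ (all $q_i > 1$), this holds for all $\vec g$ if and only if $\vec w \in A_{\vec Q} = A_{\vec P/p_0}$. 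Since the substitution $g_i \mapsto |f_i|^{p_0}$ is a bijection of nonnegative measurable functions, both directions of the equivalence pull back, giving the claimed iff.

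The only nontrivial content is the $p_0 = 1$ theorem, which is taken as known. Everything else is bookkeeping; the one point that needs explicit verification is the identity $v_{\vec w} = \prod w_i^{q/q_i} = \prod w_i^{p/p_i}$, ensuring that the weight appearing on the left-hand side of the rescaled inequality is the same object that controls membership in $A_{\vec P/p_0}$. I do not foresee any genuine obstacle beyond this check.
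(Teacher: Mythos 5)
The paper does not prove this proposition at all---it is imported verbatim from Bui and Duong \cite{MR3007100} (Proposition~2.3), so there is no in-paper argument to compare against. Your rescaling reduction ($g_i=|f_i|^{p_0}$, $q_i=p_i/p_0$, using $\mathcal{M}_{p_0}(\vec f)^{p_0}=\mathcal{M}(\vec g)$ and the invariance $\prod_i w_i^{q/q_i}=v_{\vec w}$) to the $p_0=1$ characterization of Lerner--Ombrosi--P\'erez--Torres--Trujillo-Gonz\'alez \cite{MR2483720} is correct, correctly uses the strict inequalities $p_i>p_0$ to ensure $q_i>1$ where the two-sided theorem applies, and is exactly the standard argument by which this result is established in the cited source.
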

 
\begin{Remark}
When $N=1$, the conclusion above is valid. Specifically, let $p_0\geq1$ and $p>p_0$, then $\|\mathcal{M}_{p_0}f\|_{L^{p}(w)}\leq C\|f\|_{L^{p}(w)}$ holds if and only if $w\in A_{p/{p_0}}$.
\end{Remark}

We are now ready to prove Theorem~\ref{hormander}

\begin{proof}[Proof of Theorem~\ref{hormander}]
For any $f_j$ with $\hat{f_j} \in C_c^\infty(\bbR^n)$, $j=1,\cdots,N $ and $v\in A_\infty$,
by Proposition~\ref{pro1} and Proposition~\ref{pro2}, we have
\begin{eqnarray} \label{first}
\nonumber\|\mathrm{T}_m(\vec{f})\|_{L^q(v)}&\leq &\|\mathrm{M}_\delta(\mathrm{T}_m(\vec{f}))\|_{L^q(v)}\\
\nonumber &\leq &C_{n,q,\delta,v}\|\mathrm{M}_\delta^{\#}(\mathrm{T}_m(\vec{f}))\|_{L^q(v)}\\
&\leq &C\|\mathcal{M}_{p_0}(\vec{f})\|_{L^q(v)},
\end{eqnarray}
where $p_0$ is defined as in Proposition~\ref{pro2}.

Since the maximal operator $\mathcal{M}$ is bounded on $L^{\tilde{p}^{\prime}(\cdot)}((w_1\cdots w_N)^{-q\tilde{p}^{\prime}(\cdot)})
$, by Proposition~\ref{pro3}, we have
\begin{eqnarray}\label{second}
\|\mathrm{T}_m(\vec{f})\|_{L^{p(\cdot)}(w_1^{p(\cdot)}\cdots w_N^{p(\cdot)} )}\leq C\|\mathcal{M}_{p_0}(\vec{f})\|_{L^{p(\cdot)}(w_1^{p(\cdot)}\cdots w_N^{p(\cdot)} )}.
\end{eqnarray}
By H\"older's inequality,
\begin{align}\label{holder}
\nonumber\|\mathcal{M}_{p_0}(\vec{f})\|_{L^{p(\cdot)}(w_1^{p(\cdot)}\cdots w_N^{p(\cdot)} )}=& \|\mathcal{M}_{p_0}(\vec{f})w_1\cdots w_N\|_{L^{p(\cdot)}}\\
\nonumber\leq&\|\prod_{i=1}^N\{\mathcal{M}_{p_0}(f_i)w_i\}\|_{L^{p(\cdot)}}\\
\leq&C\|\mathcal{M}_{p_0}(f_1)w_1\|_{L^{p_1(\cdot)}}\cdots\|\mathcal{M}_{p_0}(f_N)w_N\|_{L^{p_N(\cdot)}},
\end{align}
where
\begin{displaymath}
\mathcal{M}_{p_0}(f_i):=\sup_{Q\ni x}\Big(\frac{1}{|Q|}\int_Q|f_i(y_i)|^{p_0}dy_i\Big)^{\frac{1}{p_0}},
\quad i=1,\cdots,N.
\end{displaymath}
Since $p_0<q_j$, we can choose $u_j>1$ such that $p_0u_j=q_j$. Thus by Proposition~\ref{pro4}, we get
\begin{displaymath}
\|\mathcal{M}_{p_0}(f)\|_{L^{q_j}(w)}\leq C\|f\|_{L^{q_j}(w)}
\end{displaymath}
is valid for all $w\in A_{u_j}$, $f\in L^{q_j}(w)$. Using the boundedness of  $\mathcal{M}$ again, we see from Proposition~\ref{pro3} that
\begin{displaymath}
\|\mathcal{M}_{p_0}(f_j)\|_{L^{p_j(\cdot)}(w_j^{p_j(\cdot)})}\leq C\|f_j\|_{L^{p_j(\cdot)}(w_j^{p_j(\cdot)})}, j=1,\cdots,N.
\end{displaymath}
It follows from (\ref{holder}) that
\begin{displaymath}
\|\mathcal{M}_{p_0}(\vec{f})\|_{L^{p(\cdot)}(w_1^{p(\cdot)}\cdots w_N^{p(\cdot)} )}\leq C\|f_1\|_{L^{p_1(\cdot)}(w_1^{p_1(\cdot)})}\cdots\|f_N\|_{L^{p_N(\cdot)}(w_N^{p_N(\cdot)})}.
\end{displaymath}
By (\ref{second}), we obtain the desired conclusion,
\begin{displaymath}
\|\mathrm{T}_m(\vec{f})\|_{L^{p(\cdot)}(w_1^{p(\cdot)}\cdots w_N^{p(\cdot)} )}\leq C\|f_1\|_{L^{p_1(\cdot)}(w_1^{p_1(\cdot)})}\cdots\|f_N\|_{L^{p_N(\cdot)}(w_N^{p_N(\cdot)})}.
\end{displaymath}
\end{proof}

  As an application of Theorem~\ref{hormander}, we now consider the case when weight functions are defined by
\begin{equation}\label{weight}
w_j(x)= [1+|x-x_0|]^{\beta_\infty^j}\prod_{k=1}^l|x-x_k|^{\beta_k^j},  j=1,\cdots,N,
\end{equation}
where $x_k$ are fixed points in $\bbR^n$, $k=1,\cdots,l$.

\begin{Corollary}\label{last corollary}
Suppose that $Nn/2<s\leq Nn$, $m\in L^{\infty}(\bbR^{Nn})$ and
\begin{displaymath}
\sup_{R>0}\|m(R\xi)\chi_{\{1<|\xi|<2\}}\|_{H^s(\bbR^{Nn})}<\infty.
\end{displaymath}
Let the variable exponents $p_1(x),\cdots,p_N(x)$ and $p(x)$ satisfy that $\frac{1}{p_1(x)}+\frac{1}{p_2(x)}+\cdots+\frac{1}{p_N(x)}=\frac{1}{p(x)}$, where $1<p_-\leq p_+ <\infty$, $r_0:=Nn/s<(p_j)_-\leq (p_j)_+<\infty$, and $p_j \in LH_0(\bbR^n)$. Suppose that there exist some $R>0$ and $x_0\in \bbR^n$ such that $p_j(x)\equiv (p_j)_\infty=const$, for $x \in \bbR^n \setminus B(x_0,R)$, $j=1,\cdots,N $, and that
\[
 -\frac{n}{p_j(x_k)}<\beta_k^j<min\{ \frac{n}{p_j^\prime(x_k)}, \frac{n}{Np^\prime(x_k)}\},\quad k=1,\cdots,l,
\]
\[
 -\frac{n}{(p_j)_{\infty}}<\beta_\infty^j+\sum_{k=1}^l \beta_k^j<min\{\frac{n}{(p_j)_{\infty}^\prime}, \frac{n}{Np_\infty^\prime}\},
\]
for $j=1,\cdots,N$. Then $\mathrm{T}_m$ is bounded from $L^{p_1(\cdot)}(w_1^{p_1(\cdot)})\times\cdots \times L^{p_N(\cdot)}(w_N^{p_N(\cdot)})$ to $L^{p(\cdot)}(w_1^{p(\cdot)}\cdots w_N^{p(\cdot)})$.
\end{Corollary}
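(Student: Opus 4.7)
The plan is to deduce the corollary from Theorem~\ref{hormander}. The conditions on $m$ and on the exponents $p_j,\,p$ that are required by Theorem~\ref{hormander} are assumed directly, so the only task is to produce scalars $q\in(0,p_-)$ and $q_j\in(r_0,(p_j)_-)$ for which the Hardy--Littlewood maximal operator is bounded on the two families of weighted variable exponent spaces appearing there, namely $L^{\tilde p'(\cdot)}((w_1\cdots w_N)^{-q\tilde p'(\cdot)})$ and $L^{\tilde p_j'(\cdot)}(w_j^{-q_j\tilde p_j'(\cdot)})$, with $\tilde p=p/q$ and $\tilde p_j=p_j/q_j$.

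Since each $p_j$ lies in $LH_0(\bbR^n)$ and is constant outside the fixed ball $B(x_0,R)$, the rescaled exponents $\tilde p,\tilde p_j$ and their conjugates inherit log-H\"older regularity on $B(x_0,R)$ and constancy at infinity. For such exponents, the Kokilashvili--Samko type characterization of $\mathcal{M}$-boundedness on a variable Lebesgue space weighted by a power weight of the form (\ref{weight}) reduces to a Muckenhoupt-type window at each singular point $x_k$ together with an analogous window at infinity. Substituting the identities $\tilde p_j(x_k)=p_j(x_k)/q_j$ and $\tilde p_j'(x_k)=p_j(x_k)/(p_j(x_k)-q_j)$ converts the first window into an inequality of the shape
\[
-\frac{n}{p_j(x_k)}<\beta_k^j<\frac{n(p_j(x_k)-q_j)}{q_j\,p_j(x_k)},
\]
whose right-hand side tends to $n/p_j'(x_k)$ as $q_j$ approaches its lower admissible limit. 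The analogous calculation for the combined weight $(w_1\cdots w_N)^{-q}$ yields a condition on $\sum_j\beta_k^j$ that, when $q$ is driven inside $(0,p_-)$, approaches $\sum_j\beta_k^j<n/p'(x_k)$; this is implied by the per-index hypothesis $\beta_k^j<n/(Np'(x_k))$ upon summing. The lower bounds follow analogously from $\beta_k^j>-n/p_j(x_k)$, and the same strategy handles infinity using $(p_j)_\infty,\,p_\infty$ together with the hypothesis on $\beta_\infty^j+\sum_k\beta_k^j$.

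The main obstacle lies precisely in this arithmetic alignment of thresholds: one must verify that the strict inequalities on $\beta_k^j,\beta_\infty^j$, combined with the slack $(p_j)_->r_0$ and with $r_0\in[1,2)$ forced by $Nn/2<s\le Nn$, leave enough room to pick a single admissible pair $(q,q_j)$ for which both Kokilashvili--Samko windows are satisfied simultaneously. Once this bookkeeping is carried out, both maximal-function boundedness hypotheses of Theorem~\ref{hormander} are in force, and a direct appeal to that theorem delivers the desired boundedness of $\mathrm{T}_m$.
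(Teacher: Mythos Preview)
Your overall plan matches the paper's: reduce to Theorem~\ref{hormander} by choosing $q,q_j$ and verifying the two $\mathcal{M}$-boundedness hypotheses via the Kokilashvili--Samko power-weight criterion, and your summing argument for the product weight $w_1\cdots w_N$ is exactly what the paper does.

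There is, however, a genuine gap in the direct window computation. Your right-hand side
\[
\frac{n(p_j(x_k)-q_j)}{q_j\,p_j(x_k)}=n\Big(\frac{1}{q_j}-\frac{1}{p_j(x_k)}\Big)
\]
tends to $n/p_j'(x_k)$ only as $q_j\to 1$. But Theorem~\ref{hormander} requires $q_j>r_0=Nn/s$, and for every $s<Nn$ one has $r_0>1$ strictly; driving $q_j$ to ``its lower admissible limit'' therefore yields only $n(1/r_0-1/p_j(x_k))<n/p_j'(x_k)$. Thus your direct verification covers only the strictly smaller range $\beta_k^j<n(1/r_0-1/p_j(x_k))$, not the full hypothesis $\beta_k^j<n/p_j'(x_k)$, and the ``bookkeeping'' cannot close this gap no matter how carefully it is done.

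The paper sidesteps exactly this issue by isolating Lemma~\ref{last lemma}: if $\varrho\in V_{p(\cdot)}$ (i.e., $-n/p(x_k)<\beta_k<n/p'(x_k)$, a condition on the \emph{original} exponent, independent of any auxiliary $q_0$), then by \cite[Remark~2.10]{MR2499882} one has $\varrho^{-q_0}\in V_{(\tilde p)'(\cdot)}$ for \emph{every} $q_0\in(1,p_-)$, hence in particular for any $q_j\in(r_0,(p_j)_-)$. This decoupling of the weight class from the choice of $q_j$ is the key step you are missing; once it is in place, the hypotheses on $\beta_k^j$ translate immediately to $w_j\in V_{p_j(\cdot)}$ and $w_1\cdots w_N\in V_{p(\cdot)}$, and Lemma~\ref{last lemma} plus Theorem~\ref{hormander} finish the proof. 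You should also record, as the paper does, that $p(\cdot)\in LH_0$ follows from $p_j(\cdot)\in LH_0$ via $1/p=\sum_j 1/p_j$ before invoking the lemma for the product weight.
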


To prove Corollary~\ref{last corollary}, we need to define a class of weight functions, which is a special case of \cite[Definition~2.7]{MR2499882}.

\begin{Definition}[\cite{MR2499882}]
Let $p(\cdot)\in C(\bbR^n)$ and there exists $R>0$ and $x_0\in \bbR^n$, such that $p(x)\equiv p_\infty=const$, for all $x\in \bbR^n \setminus B(x_0,R)$. A weight function $w$ of the form
\[
w= [1+|x-x_0|]^{\beta_\infty}\prod_{k=1}^l|x-x_k|^{\beta_k}
\]
is said  to belong to the class $V_{p(\cdot)}(\bbR^n,\Pi)$, if
\[
-\frac{n}{p(x_k)}<\beta_k<\frac{n}{p^\prime(x_k)},\quad k=1,\cdots,l,
\]
 and
 \[
 -\frac{n}{p_\infty}<\beta_\infty +\sum_{k=1}^{l}\beta_k<\frac{n}{p_\infty^\prime}.
 \]
\end{Definition}

We have the following lemma.

\begin{Lemma}\label{last lemma}
Let $p(\cdot)\in LH_0$ satisfy $1<p_-\leq p_+<\infty$. Suppose that there exist some $R>0$ and $x_0\in \bbR^n$ such that $p(x)\equiv p_\infty=const$ for $x \in \bbR^n \setminus B(x_0,R)$. If $\varrho\in V_{p(\cdot)}(\bbR^n,\Pi)$, then $\mathcal{M}$ is bounded on the space $L^{(\tilde{p})^\prime(\cdot)}(\varrho^{-q_0(\tilde{p})^\prime (\cdot)})$ for all $q_0 \in (1,p_-)$, where $ \tilde{p}(\cdot)=\frac{p(\cdot)}{q_0}$.
\end{Lemma}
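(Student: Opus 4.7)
My plan is to identify the target space with a standard weighted variable Lebesgue space $L^{r(\cdot)}(W^{r(\cdot)})$ and apply a known boundedness theorem for the Hardy--Littlewood maximal operator on such spaces from \cite{MR2499882}. Setting $r(\cdot) := (\tilde{p})'(\cdot)$ and $W := \varrho^{-q_0}$, we have $\varrho^{-q_0(\tilde{p})'(\cdot)} = W^{r(\cdot)}$, so the space $L^{(\tilde{p})'(\cdot)}(\varrho^{-q_0(\tilde{p})'(\cdot)})$ coincides with $L^{r(\cdot)}(W^{r(\cdot)})$. The proof then reduces to verifying two hypotheses: the regularity of $r$ and the weight-class membership $W \in V_{r(\cdot)}(\bbR^n,\Pi)$.

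For the first hypothesis, I would check that $r$ lies in $LH_0$, satisfies $1 < r_- \leq r_+ < \infty$, and is constant outside a ball. Since $q_0 \in (1, p_-)$, the auxiliary exponent $\tilde{p} = p/q_0$ inherits $LH_0$ from $p$, has $\tilde{p}_- = p_-/q_0 > 1$ and $\tilde{p}_+ < \infty$, and equals $\tilde{p}_\infty := p_\infty/q_0$ outside $B(x_0, R)$. Because the map $t \mapsto t/(t-1)$ is Lipschitz on $[\tilde{p}_-, \tilde{p}_+]$, conjugation preserves these properties, so $r = (\tilde{p})'$ lies in $LH_0$, equals $r_\infty := (\tilde{p}_\infty)'$ outside $B(x_0, R)$, and satisfies $1 < r_- \leq r_+ < \infty$, which is exactly the regularity required by the quoted theorem.

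For the second hypothesis, I would verify $W \in V_{r(\cdot)}(\bbR^n,\Pi)$. Since $\varrho$ is a power product, so is $W = \varrho^{-q_0}$, with exponents $-q_0\beta_k$ at each $x_k$ and $-q_0\beta_\infty$ at infinity. Using the identities $r(x_k) = p(x_k)/(p(x_k) - q_0)$ and $r'(x_k) = p(x_k)/q_0$, one computes $n/r(x_k) = n - nq_0/p(x_k)$ and $n/r'(x_k) = nq_0/p(x_k)$; substituting into the defining inequalities for $V_{r(\cdot)}$ and dividing by $-q_0$ rearranges the condition at $x_k$ to
\[
-\frac{n}{p(x_k)} < \beta_k < \frac{n}{q_0} - \frac{n}{p(x_k)},
\]
with the analogous condition at infinity involving $p_\infty$ in place of $p(x_k)$. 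These translated bounds must then be matched with the hypothesis $\varrho \in V_{p(\cdot)}$ and the admissibility $q_0 \in (1, p_-)$.

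Once both verifications are in place, the quoted theorem of \cite{MR2499882} supplies $\mathcal{M}$ bounded on $L^{r(\cdot)}(W^{r(\cdot)}) = L^{(\tilde{p})'(\cdot)}(\varrho^{-q_0(\tilde{p})'(\cdot)})$, which is the desired conclusion. The main obstacle is the $V_{r(\cdot)}$-membership check in step two: the derived upper bound $n/q_0 - n/p(x_k)$ is strictly tighter than the bound $n/p'(x_k) = n - n/p(x_k)$ furnished by $\varrho \in V_{p(\cdot)}$ whenever $q_0 > 1$, so closing this gap over the full range $q_0 \in (1, p_-)$ is the delicate technical point and may require a refined argument exploiting the precise form of the weight exponents or a duality-style detour through $\mathcal{M}$-boundedness on $L^{\tilde{p}(\cdot)}(\varrho^{q_0\tilde{p}(\cdot)})$.
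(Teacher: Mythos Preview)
Your overall strategy matches the paper's exactly: pass to $r(\cdot)=(\tilde p)'(\cdot)$ and $W=\varrho^{-q_0}$, verify the regularity of $r$, verify $W\in V_{r(\cdot)}(\bbR^n,\Pi)$, and then apply the weighted maximal theorem from \cite{MR2499882}. For the regularity step the paper cites \cite[Proposition~2.3]{cruz} to get $(\tilde p)'\in LH_0$, and for the final step it invokes \cite[Theorem~2.12]{MR2499882}, just as you propose.

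The divergence is in the second step. Rather than attempting the direct inequality check you outline, the paper simply cites \cite[Remark~2.10]{MR2499882} for the implication $\varrho\in V_{p(\cdot)}\Rightarrow\varrho^{-q_0}\in V_{(\tilde p)'(\cdot)}$ and moves on. Your computation is correct that translating the $V_{p(\cdot)}$ bounds through $W=\varrho^{-q_0}$ and $r=(\tilde p)'$ yields the upper constraint $\beta_k<n/q_0-n/p(x_k)$, which is strictly stronger than $\beta_k<n/p'(x_k)$ whenever $q_0>1$; so the implication does not follow from the defining inequalities alone. The paper offers no in-line resolution of this gap and defers entirely to the cited remark. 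You should therefore consult \cite[Remark~2.10]{MR2499882} directly: either it supplies the missing argument (possibly via a duality or rescaling identity for the $V$-classes rather than a raw inequality comparison), or it carries an additional hypothesis restricting the admissible $q_0$ or $\beta_k$ beyond what the lemma as stated records.
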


\begin{proof}
If $p(\cdot) \in LH_0$, then $\tilde{p}(\cdot)\in LH_0$. By \cite[Proposition~2.3]{cruz}, we have $(\tilde{p})^\prime(\cdot)\in LH_0$.
And since $\varrho\in V_{p(\cdot)}(\bbR^n,\Pi)$, by \cite[Remark~2.10]{MR2499882}, we know $\varrho^{-q_0} \in V_{(\tilde{p})^\prime(\cdot)}(\bbR^n,\Pi)$. Then it follows from \cite[Theorem~2.12]{MR2499882} that $\mathcal{M}$ is bounded on $L^{(\tilde{p})^\prime(\cdot)}(\varrho^{-q_0(\tilde{p})^\prime (\cdot)})$.
  \end{proof}

Now we are ready to prove Corollary~\ref{last corollary}.

\begin{proof}[Proof of Corollary~\ref{last corollary}]
Fix some $1<q<p_-$. Let $q_j$, $\tilde{p}(x)$ and $\tilde{p_j}(x)$ be defined as in Theorem~\ref{hormander}. By the assumption, we have
\[
-\frac{n}{p_j(x_k)}<\beta_k^j<\frac{n}{p_j^\prime(x_k)},\quad k=1,\cdots,l,
\]
\[
 -\frac{n}{(p_j)_{\infty}}<\beta_\infty^j+\sum_{k=1}^l \beta_k^j<\frac{n}{(p_j)_{\infty}^\prime}.
\]
So $w_j\in V_{p_j(\cdot)}(\bbR^n, \Pi)$.
By Lemma~\ref{last lemma}, $\mathcal{M}$ is bounded on $L^{(\tilde{p_j})^\prime(\cdot)}(w_j^{-q_j(\tilde{p_j})^\prime (\cdot)})$.
Again, by the assumption, we get
\begin{equation}\label{last inquality1}
 -\sum_{j=1}^N\frac{n}{p_j(x_k)}<\sum_{j=1}^N \beta_k^j<\frac{n}{p^\prime(x_k)}, \quad k=1,\cdots,l,
 \end{equation}
 \begin{equation}\label{last inquality2}
 -\sum_{j=1}^N\frac{n}{(p_j)_{\infty}}<\sum_{j=1}^N \beta_\infty^j+\sum_{k=1}^l\sum_{j=1}^N \beta_k^j< \frac{n}{p_\infty^\prime}.
\end{equation}
Note that the left side of (\ref{last inquality1}) and (\ref{last inquality2}) are equal to $-\frac{n}{p(x_k)}$ and $-\frac{n}{p_{\infty}}$, respectively. So $w_1\cdots w_N \in V_{p(\cdot)}(\bbR^n, \Pi)$.

 By of \cite[Proposition~2.3]{cruz}, we know $\frac{1}{p_j(\cdot)}\in LH_0$. Therefore, $\frac{1}{p(\cdot)}=\frac{1}{p_1(\cdot)}+\cdots+\frac{1}{p_N(\cdot)} \in LH_0$. Thus $p(\cdot)\in LH_0$.
Now by Lemma~\ref{last lemma}, $\mathcal{M}$ is bounded on $L^{(\tilde{p})^\prime(\cdot)}({(w_1\cdots w_N)}^{-q(\tilde{p})^\prime (\cdot)})$. By Theorem~\ref{hormander}, there exist some $C>0$ such that
 \[
 \|\mathrm{T}_m(\vec{f})\|_{L^{p(\cdot)}(w_1^{p(\cdot)}\cdots w_N^{p(\cdot)})} \leq C \prod_{j=1}^N\|f_j\|_{L^{p_j(\cdot)}(w_j^{p_j(\cdot)})}.
 \]
\end{proof}


\begin{thebibliography}{10}

\bibitem{R&T&Z}
P.~Auscher and M.~J. Carro.
\newblock On relations between operators on {${\bf R}^N,\;{\bf T}^N$} and
  {${\bf Z}^N$}.
\newblock {\em Studia Math.}, 101(2):165--182, 1992.

\bibitem{MR2172393}
O.~Blasco.
\newblock Bilinear multipliers and transference.
\newblock {\em Int. J. Math. Math. Sci.}, (4):545--554, 2005.

\bibitem{notes}
O.~Blasco.
\newblock Notes in transference of bilinear multipliers.
\newblock In {\em Advanced courses of mathematical analysis {III}}, pages
  28--38. World Sci. Publ., Hackensack, NJ, 2008.

\bibitem{MR2656523}
O.~Blasco.
\newblock Notes on the spaces of bilinear multipliers.
\newblock {\em Rev. Un. Mat. Argentina}, 50(2):23--37, 2009.

\bibitem{MR3007100}
T.~A. Bui and X.~T. Duong.
\newblock Weighted norm inequalities for multilinear operators and applications
  to multilinear {F}ourier multipliers.
\newblock {\em Bull. Sci. Math.}, 137(1):63--75, 2013.

\bibitem{MR3178381}
J.~Chen and G.~Lu.
\newblock H\"ormander type theorems for multi-linear and multi-parameter
  {F}ourier multiplier operators with limited smoothness.
\newblock {\em Nonlinear Anal.}, 101:98--112, 2014.

\bibitem{MR518170}
R.~R. Coifman and Y.~Meyer.
\newblock {\em Au del\`a des op\'erateurs pseudo-diff\'erentiels}, volume~57 of
  {\em Ast\'erisque}.
\newblock Soci\'et\'e Math\'ematique de France, Paris, 1978.
\newblock With an English summary.

\bibitem{MR576041}
R.~R. Coifman and Y.~Meyer.
\newblock Fourier analysis of multilinear convolutions, {C}alder\'on's theorem,
  and analysis of {L}ipschitz curves.
\newblock In {\em Euclidean harmonic analysis ({P}roc. {S}em., {U}niv.
  {M}aryland, {C}ollege {P}ark, {M}d., 1979)}, volume 779 of {\em Lecture Notes
  in Math.}, pages 104--122. Springer, Berlin, 1980.

\bibitem{cruz}
D.~V. Cruz-Uribe and A.~Fiorenza.
\newblock {\em Variable {L}ebesgue spaces}.
\newblock Applied and Numerical Harmonic Analysis. Birkh\"auser/Springer,
  Heidelberg, 2013.
\newblock Foundations and harmonic analysis.

\bibitem{MR1808390}
D.~Fan and S.~Sato.
\newblock Transference on certain multilinear multiplier operators.
\newblock {\em J. Aust. Math. Soc.}, 70(1):37--55, 2001.

\bibitem{MR0447953}
C.~Fefferman and E.~M. Stein.
\newblock {$H^{p}$} spaces of several variables.
\newblock {\em Acta Math.}, 129(3-4):137--193, 1972.

\bibitem{MR2958938}
M.~Fujita and N.~Tomita.
\newblock Weighted norm inequalities for multilinear {F}ourier multipliers.
\newblock {\em Trans. Amer. Math. Soc.}, 364(12):6335--6353, 2012.

\bibitem{MR3183648}
L.~Grafakos, L.~Liu, D.~Maldonado, and D.~Yang.
\newblock Multilinear analysis on metric spaces.
\newblock {\em Dissertationes Math. (Rozprawy Mat.)}, 497:121, 2014.

\bibitem{multi-extra}
L.~Grafakos and J.~M. Martell.
\newblock Extrapolation of weighted norm inequalities for multivariable
  operators and applications.
\newblock {\em J. Geom. Anal.}, 14(1):19--46, 2004.

\bibitem{Loukas&si}
L.~Grafakos and Z.~Si.
\newblock the hormander type multiplier theorem for multilinear operators.
\newblock {\em Journal fur die Reine und Angewandte Mathematik}, 668:133--147,
  2012.

\bibitem{MR2595656}
L.~Grafakos and J.~Soria.
\newblock Translation-invariant bilinear operators with positive kernels.
\newblock {\em Integral Equations Operator Theory}, 66(2):253--264, 2010.

\bibitem{MR1880324}
L.~Grafakos and R.~H. Torres.
\newblock Multilinear {C}alder\'on-{Z}ygmund theory.
\newblock {\em Adv. Math.}, 165(1):124--164, 2002.

\bibitem{MR0121655}
L.~H{\"o}rmander.
\newblock Estimates for translation invariant operators in {$L^{p}$}\ spaces.
\newblock {\em Acta Math.}, 104:93--140, 1960.

\bibitem{MR2606534}
A.~Huang and J.~Xu.
\newblock Multilinear singular integrals and commutators in variable exponent
  {L}ebesgue spaces.
\newblock {\em Appl. Math. J. Chinese Univ. Ser. B}, 25(1):69--77, 2010.

\bibitem{MR2499882}
V.~M. Kokilashvili and S.~G. Samko.
\newblock Operators of harmonic analysis in weighted spaces with non-standard
  growth.
\newblock {\em J. Math. Anal. Appl.}, 352(1):15--34, 2009.

\bibitem{weightandvariable}
{\"O}.~Kulak and A.~T. G{\"u}rkanl{\i}.
\newblock Bilinear multipliers of weighted {L}ebesgue spaces and variable
  exponent {L}ebesgue spaces.
\newblock {\em J. Inequal. Appl.}, pages 2013:259, 21, 2013.

\bibitem{one-multiplier}
D.~S. Kurtz.
\newblock Littlewood-{P}aley and multiplier theorems on weighted {$L^{p}$}\
  spaces.
\newblock {\em Trans. Amer. Math. Soc.}, 259(1):235--254, 1980.

\bibitem{one-multiplier1}
D.~S. Kurtz and R.~L. Wheeden.
\newblock Results on weighted norm inequalities for multipliers.
\newblock {\em Trans. Amer. Math. Soc.}, 255:343--362, 1979.

\bibitem{MR1491450}
M.~Lacey and C.~Thiele.
\newblock {$L^p$} estimates on the bilinear {H}ilbert transform for
  {$2<p<\infty$}.
\newblock {\em Ann. of Math. (2)}, 146(3):693--724, 1997.

\bibitem{MR1689336}
M.~Lacey and C.~Thiele.
\newblock On {C}alder\'on's conjecture.
\newblock {\em Ann. of Math. (2)}, 149(2):475--496, 1999.

\bibitem{MR2483720}
A.~K. Lerner, S.~Ombrosi, C.~P{\'e}rez, R.~H. Torres, and
  R.~Trujillo-Gonz{\'a}lez.
\newblock New maximal functions and multiple weights for the multilinear
  {C}alder\'on-{Z}ygmund theory.
\newblock {\em Adv. Math.}, 220(4):1222--1264, 2009.

\bibitem{MR3161113}
K.~{Li} and W.~{Sun}.
\newblock {Weighted estimates for multilinear Fourier multipliers}.
\newblock {\em ArXiv e-prints}, July 2012.

\bibitem{MR1887641}
C.~Muscalu, T.~Tao, and C.~Thiele.
\newblock Multi-linear operators given by singular multipliers.
\newblock {\em J. Amer. Math. Soc.}, 15(2):469--496, 2002.

\bibitem{MR2810550}
X.~Tao, X.~Yu, and H.~Zhang.
\newblock Multilinear {C}alder\'on {Z}ygmund operators on variable exponent
  {M}orrey spaces over domains.
\newblock {\em Appl. Math. J. Chinese Univ. Ser. B}, 26(2):187--197, 2011.

\bibitem{MR2671120}
N.~Tomita.
\newblock A {H}\"ormander type multiplier theorem for multilinear operators.
\newblock {\em J. Funct. Anal.}, 259(8):2028--2044, 2010.

\bibitem{MR2471164}
F.~Villarroya.
\newblock Bilinear multipliers on {L}orentz spaces.
\newblock {\em Czechoslovak Math. J.}, 58(133)(4):1045--1057, 2008.

\end{thebibliography}
 \end{document}